\newtheorem{dfn}{Definition}[section]
\newtheorem{thm}[dfn]{Theorem}
\newtheorem{lem}[dfn]{Lemma}
\newtheorem{rem}[dfn]{Remark}
\newtheorem{prop}[dfn]{Proposition}
\newtheorem{thma}{Theorem}
\newtheorem{conja}{Conjecture}
\newtheorem{conjb}{Conjecture}
\newcommand{\N}{\mathbb{N}}
\newcommand{\R}{\mathbb{R}}
\newcommand{\dis}{\displaystyle}
\newcommand{\ve}{\varepsilon}
\newcommand{\lan}[2]{\left\langle #1,#2\right\rangle}
\begin{document}
\title{Two-sided bounds on free energy of directed polymers on strongly recurrent graphs }
\author{Naotaka Kajino\thanks{Research supported in part by JSPS KAKENHI Grant Number JP18H01123.}\and Kosei Konishi\and Makoto Nakashima\thanks{Research supported by JSPS KAKENHI Grant Numbers JP18H01123, JP18K13423.}}

\date{}
\pagestyle{myheadings}
\markboth{}{Bounds on Free energy of DPRE on strongly recurrent graphs}

\maketitle


\sloppy

\begin{abstract}
We study the directed polymers in random environment on an infinite graph $G=(V,E)$ on which the underlying
random walk satisfies sub-Gaussian heat kernel bounds with spectral dimension $d_{s}$ strictly less than two.
Our goal in this paper is to show
(i) the existence and the coincidence of the quenched and the annealed free energy $F_q(\beta)$, $F_a(\beta)$ and
(ii) that $F_a(\beta)-F_q(\beta)$ is comparable to $\beta^{\frac{4}{2-d_{s}}}$ for small inverse temperature $\beta$.
\end{abstract}

\vspace{1em}
{\bf AMS 2010 Subject Classification: Primary 82D60. Secondary 82C44.}  

\vspace{1em}{\bf Key words:}  directed polymers in random environment, phase transition, fractal graph, free energy, strongly recurrent graphs.

\vspace{1em}
For a probability space $(\Omega, {\cal F},P )$, we denote by $P[X]$ the expectation of random variable $X$ with respect to $P$.  Let $\mathbb{N}_0=\{0,1,2,\cdots\}$, $\mathbb{N}=\{1,2,3,\cdots\}$, and $\mathbb{Z}=\{0,\pm 1,\pm 2,\cdots\}$.  
Let $C_{x_1,\cdots,x_p}$ or $C(x_1,\cdots,x_p)$ be a non-random constant which depends only on the parameters $x_1,\cdots,x_p$. 



 \section{Introduction}

\subsection{The model.} 
The directed polymers in random environment (DPRE) was introduced by Henley and Huse \cite{HenHus} in the physics literature to analyze an influence of random media to the shape of polymer chains. Later on, Imbrie-Spencer and Bolthausen succeeded to treat DPRE mathematically \cite{ImbSpe, Bol}. Then, a lot of progress has been achieved  by many authors \cite{AlbZho,BorCorRem,ComYos,ComShiYos,ComShiYos2}. In particular, it is known that the phase transition of delocalization-localization of polymer chain is characterized by the  free energy\cite{ComShiYos}.  The reader may refer the survey text \cite{Com}.

In the most studied model, polymer chain and random media are represented by a path of random walk on graph $\mathbb{Z}^{d}$  and time-space i.i.d.\,random variables $\omega$ respectively, and their interaction is described in terms of Gibbs measure with Hamiltonian $H(\omega,S)$. 
In this paper, we consider the model on a general infinite  graph $G=(V,E)$ with bounded degree and replace the underlying walk by a reversible random walk $S$.

To formulate our model, we introduce  a framework of a graph $G=(V,E)$ and a random walk $S=\{S_n\}_{n=0}^\infty$ with suitable conditions as in \cite{Bar}. 

\underline{\textit{ Graph and random walk}}
Let $G=(V,E)$ be an infinite graph where $V$ is the set of  vertices with  countably infinite elements and $E$ is the set of undirected edges. Let $0\in V$ be the distinguished vertex called the origin. We assume that graph does not have multiple edges. Then, each edge is identified as a pair of vertices $\lan{x}{y}$ and we denote by $N(x)=\left\{y\in V:\text{There exists $e\in E$ s.t.~}e= \lan{y}{x}\right\}$.

 We assume that \begin{align}
\text{graph  $G$ is connected,}\label{AssGra1}
\end{align}
that is, for any $x,y\in V$, there exist an $n\in \mathbb{N}$ and  a sequence of $x_0,x_1,\cdots,x_n\in V$ such that $x_0=x$, $x_n=y$ and $\lan{x_{i-1}}{x_i}\in E$ for each $i=1,\cdots,n$. 

Also, we introduce the weight $\{\mu_{xy}\}_{x,y\in V}$ on graph $G$  such that \begin{enumerate}
\item $\mu_{xy}=\mu_{yx}$.
\item $\mu_{xy}\geq 0$ for any $x,y\in V$.
\item $\mu_{xy}>0$ if and only if $\lan{x}{y}\in E$.
\end{enumerate}

We consider a random walk $S$ on weighted graph $G$ with transition probability given by \begin{align*}
p_{x,y}=P_S(S_{n+1}=y|S_n=x)=\frac{\mu_{xy}}{\mu_x},\quad y\in N(x),
\end{align*}
where $\mu_x=\dis \sum_{y\in N(x)}\mu_{xy}$ and  we denote by $(\Omega_S,\mathcal{G}_S,P_S)$ the probability space on which  $S$ is defined. In particular, $P_S^x$ denotes the law of $S$ starting from $x\in V$, and also by $P_S=P_S^0$ for simplicity.

We remark that a random walk $S$ is reversible since \begin{align*}
\mu_xp_{x,y}=\mu_yp_{y,x} .
\end{align*}

Throughout the  paper, we assume the ``bounded weights" condition:  There exists a constant $C_\mu$ such that \begin{align}C_\mu^{-1}\leq \mu_{xy}\leq C_\mu  \tag{BW} \label{BW}
\end{align}
 for any $x,y\in V$ with $\langle x, y\rangle\in E$.


\underline{\textit{Random media}}
Let $\{\omega(n,x): n\in \mathbb{N}, x\in V\}$ be i.i.d.~$\R$-valued random variables which are independent of $S$ and defined on the probability space $(\Omega,\mathcal{F},Q)$. In this paper, we assume \begin{align}
 Q[\omega(n,x)]=0,\quad \text{and }\quad Q\left[\exp\left(\beta\omega(n,x)\right)\right]=:e^{\lambda(\beta)}<\infty \text{ for all }\beta\in\R.\label{ass:omega}
 \end{align}
 
For each $n\in\N$, $z\in V$, let $\theta_{n,x}:\Omega\to \Omega$ be a time-space shift of random media: \begin{align*}
\theta_{n,z}\circ \omega(m,y)=\omega(m+n,y+z)
\end{align*}
and we define $\theta_{n,z}\circ f(\omega):=f(\theta_{n,z}\circ \omega)$ for a measurable function $f:\Omega\to \R$.

 Then, Hamiltonian is defined by \begin{align*}
 H_n(S,\omega)=\sum_{i=1}^n \omega(i,S_i)
 \end{align*}
and the polymer measure is given by \begin{align*}
P_{n,x}^\beta(dS)=\frac{1}{Z_{n,\beta}^{x}}\exp\left(\beta H_n(S,\omega)\right)P_S^x(dS)
\end{align*}
where \begin{align*}
Z^{x}_{n,\beta}:=Z_{n}^{x}=P_S^x\left[\exp\left(\beta H_n(S,\omega)\right)\right]
\end{align*}
is the normalizing constant called a \textit{partition function}.



Then, the argument in \cite[Theorem 3.2]{ComYos} assure the existence of a critical point $\beta_1\in [0,\infty]$ such that \begin{align*}
Q(W_{\infty,\beta}^{x}>0)=\begin{cases}
1\  (\text{weak disorder})\quad &\beta\in \{0\}\cup [0,\beta_1)\\
0\ (\text{strong disorder})\quad &\beta\in (\beta_1,\infty).
\end{cases}
\end{align*}
 


One then defines the free energy of the systems.

\begin{prop}\label{propFE}
Assume that for each $x\in V$, \begin{align}
\varlimsup_{n\to \infty}\frac{1}{n}\log \tilde{V}(x,n)=0,\label{eq:subexp}\tag{subExp}
\end{align}
where $\tilde{V}(x,n)$ is the number elements in the ball centered at $x$ with radius $n$.

Then, for any $x\in V$, the following limit exists and is constant $Q$-a.s.: \begin{align*}
F_{q,x}(\beta)&=\lim_{n\to \infty}\frac{1}{n}\log Z_{n,\beta}^{x}=\lim_{n\to\infty}\frac{1}{n}Q[\log Z_{n}^{x}(\beta)]\\
F_a(\beta)&=\lim_{n\to\infty}\frac{1}{n}\log Q[Z_{n,\beta}^{x}]=\lambda(\beta).
\end{align*}
In particular, $F_{q,x}(\beta)$ is constant in $x\in V$ and we denote by $F_q(\beta)=F_{q,x}(\beta)$ $(x\in V)$.

$F_{q,x}(\beta)$ and $F_a(\beta)$ are called the quenched free energy and the annealed free energy of the systems, respectively. 
\end{prop}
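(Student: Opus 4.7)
The annealed identity is a one-line computation. By Fubini, the independence of the random variables $\{\omega(i,\cdot)\}_{i\in\N}$ across the time coordinate $i$, and \eqref{ass:omega},
\begin{equation*}
Q[Z_{n,\beta}^{x}]=P_S^x\bigl[Q[e^{\beta H_n(S,\omega)}]\bigr]=P_S^x[e^{n\lambda(\beta)}]=e^{n\lambda(\beta)},
\end{equation*}
so $\frac{1}{n}\log Q[Z_{n,\beta}^{x}]=\lambda(\beta)$ holds as an identity for every $n$; this step needs neither \eqref{eq:subexp} nor any structural assumption on $G$ beyond being a graph.

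For the quenched free energy I would combine a concentration estimate with approximate superadditivity. Introduce the filtration $\mathcal{F}_k:=\sigma(\omega(i,\cdot):i\le k)$ and decompose
\begin{equation*}
\log Z_n^{x}-Q[\log Z_n^{x}]=\sum_{k=1}^n d_k,\qquad d_k:=Q[\log Z_n^{x}\mid\mathcal{F}_k]-Q[\log Z_n^{x}\mid\mathcal{F}_{k-1}].
\end{equation*}
The assumption \eqref{ass:omega} yields exponential-moment bounds on the $d_k$'s, uniformly in $n$ and $k$, because $d_k$ is controlled by a convex functional of the single time-slice $\omega(k,\cdot)$ reweighted by the polymer measure; a Burkholder--Davis--Gundy-type inequality, followed by Borel--Cantelli, then gives $\tfrac{1}{n}(\log Z_n^{x}-Q[\log Z_n^{x}])\to 0$ $Q$-almost surely. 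This reduces the problem to showing convergence of the deterministic sequence $\frac{1}{n}Q[\log Z_n^{x}]$.

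For the convergence of the means I would introduce the point-to-point partition function $Z_n^{x,y}:=P_S^x[e^{\beta H_n};\,S_n=y]$. The Markov property of $S$ at time $n$, restricted to paths that visit $x$ at time $n$, gives
\begin{equation*}
Z_{n+m}^{x,x}\;\ge\; Z_n^{x,x}\cdot\theta_{n,0}(Z_m^{x,x}),\qquad Z_{n+m}^{x}\;\ge\; Z_n^{x,x}\cdot\theta_{n,0}(Z_m^{x}),
\end{equation*}
and, since $\theta_{n,0}(\cdot)$ is independent of $\mathcal{F}_n$ and equidistributed with the un-shifted quantity, passing to $Q[\log\cdot]$ produces the superadditive estimates
\begin{equation*}
Q[\log Z_{n+m}^{x,x}]\ge Q[\log Z_n^{x,x}]+Q[\log Z_m^{x,x}],\qquad Q[\log Z_{n+m}^{x}]\ge Q[\log Z_n^{x,x}]+Q[\log Z_m^{x}].
\end{equation*}
Fekete's lemma, applied if necessary along a parity subsequence on which $P_S^x(S_n=x)>0$, supplies the limit $L:=\lim_n\tfrac{1}{n}Q[\log Z_n^{x,x}]\in[-\infty,\lambda(\beta)]$, and the second inequality then forces $\liminf_n\tfrac{1}{n}Q[\log Z_n^{x}]\ge L$. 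For the matching upper bound I would use the reverse estimate $Z_{n+m}^{x}\le Z_n^{x}\cdot\max_{y\in B(x,n)}\theta_{n,0}(Z_m^{y})$ and a union bound over the ball: the concentration of the previous step yields sub-Gaussian tails for $\log Z_m^{y}-Q[\log Z_m^{y}]$ uniformly in $y$, while \eqref{eq:subexp} delivers $\log\tilde V(x,n)=o(n)$, which is exactly what is needed to make the union-bound error negligible on the scale $n$. This identifies $F_{q,x}(\beta)=L$ both $Q$-a.s.\ and in $L^1(Q)$. The independence from $x$ is then a short corollary: for any $x,y\in V$, connectivity \eqref{AssGra1} and \eqref{BW} provide a finite path from $x$ to $y$ of length $k=k(x,y)$ of positive probability, so $Z_{n+k}^{x}\ge c(x,y)\,e^{W_k}\,\theta_{k,0}(Z_n^{y})$ with $W_k$ an $\mathcal{F}_k$-measurable random variable having all exponential moments; taking $\tfrac{1}{n}\log$ and $n\to\infty$ yields $F_{q,x}(\beta)\ge F_{q,y}(\beta)$, and the symmetric argument closes the circle.

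\emph{Main obstacle.} The delicate point of the plan is bridging the superadditive bound for the ``loop'' partition function $Z_n^{x,x}$ to the desired statement for the free partition function $Z_n^{x}$: on $\Z^d$ translation invariance makes this step trivial, whereas on a general graph one must control $\max_{y\in B(x,n)}\log Z_m^{y}$, and this is precisely where the subexponential volume growth hypothesis \eqref{eq:subexp} enters essentially.
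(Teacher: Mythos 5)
Your annealed computation, the a.s.\ concentration via a martingale-difference argument (which is what the Liu--Watbled inequality the paper cites amounts to), the superadditivity of $Z_n^{x,x}$, the lower bound $\liminf_n \frac{1}{n}Q[\log Z_n^x]\ge L$, and the reduction of $x$-independence to $Q[\log Z^x_{n+d(x,y)}]\geq Q[\log Z^y_n]$ all match the paper in substance. The gap is in the upper bound.

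Your estimate $Z_{n+m}^{x}\le Z_n^{x}\cdot\max_{y\in B(x,n)}\theta_{n,0}(Z_m^{y})$ together with a union bound over the ball yields
\begin{equation*}
Q[\log Z_{n+m}^x]\le Q[\log Z_n^x]+\max_{y\in B(x,n)}Q[\log Z_m^y]+O\!\left(\sqrt{m\log\tilde V(x,n)}\right),
\end{equation*}
and \eqref{eq:subexp} does make the last term $o(m)$. But the union bound only controls the \emph{fluctuations} $\log Z_m^y-Q[\log Z_m^y]$; nothing in your argument relates the \emph{means} $Q[\log Z_m^y]$ for $y\neq x$ to $L=\lim_n\frac{1}{n}Q[\log Z_n^{x,x}]$. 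A priori one only has the Jensen bound $Q[\log Z_m^y]\le m\lambda(\beta)$, which is too weak, and saying more about $Q[\log Z_m^y]$ for general $y$ is essentially the statement you are trying to prove, so the step is circular. This is exactly the ``main obstacle'' you flag at the end, and a union bound alone does not overcome it.

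The paper closes this gap with two ingredients your plan lacks. First, a ping-pong inequality: from $Z_{2n}^{x,x}\ge Z_n^{x,y}\,\theta_{n,0}(Z_n^{y,x})$ together with \eqref{BW} one obtains $Q[\log Z_{2n}^{x,x}]\ge 2\,Q[\log Z_n^{x,y}]-c$ uniformly in $y$, hence $Q[\log Z_n^{x,y}]\le\tfrac12 Q[\log Z_{2n}^{x,x}]+\tfrac{c}{2}\le nL+\tfrac{c}{2}$, a uniform-in-$y$ bound on the mean of the \emph{point-to-point} log-partition function. Second, instead of a union bound, the paper passes to fractional moments: by Jensen, $\frac{1}{n}Q[\log Z_n^x]\le\frac{1}{tn}\log Q[(Z_n^x)^t]$, then $(Z_n^x)^t\le\sum_y (Z_n^{x,y})^t$, and each summand is bounded by the concentration MGF estimate $e^{Knt^2/(1-t)}$ times $\exp(tQ[\log Z_n^{x,y}])\le\exp(t(nL+c/2))$; the sum over $y\in B(x,n)$ contributes only $\log\tilde V(x,n)=o(n)$. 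Sending $n\to\infty$ and then $t\downarrow 0$ gives $\limsup_n\frac{1}{n}Q[\log Z_n^x]\le L$. If you want to keep a union-bound flavor you can, but you must (i) split $Z_n^x=\sum_y Z_n^{x,y}$ into point-to-point pieces rather than peel off a free partition function $Z_m^y$, and (ii) invoke the ping-pong inequality to bound $\max_y Q[\log Z_n^{x,y}]$ by $nL+O(1)$. Without these two steps, your argument does not close.
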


\begin{rem}
Jensen's inequality implies that \begin{align*}
\frac{1}{n}Q[\log Z_{n,\beta}^{x}]\leq \frac{1}{n}\log Q[Z_{n,\beta}^{x}]=\lambda (\beta)
\end{align*}
and $F_{q}(\beta)\leq \lambda(\beta)=:F_a(\beta)$. The same argument as in \cite[Theorem 3.2]{ComYos} assures the existence of critical point $\beta_2\in [0,\infty]$ such that 
\begin{align*}
F_q(\beta)-F_a(\beta)\begin{cases}
=0\quad &\beta\in [0,\beta_2]\\
<0\ (\text{very strong disorder})\quad &\beta\in (\beta_2,\infty).
\end{cases}
\end{align*}
\end{rem}

Our main result gives an asymptotic order of $F_q(\beta)-F_a(\beta)$, where the detail of assumptions is discussed in subsection \ref{sub:HK}:
\begin{thm}\label{main1}
\begin{enumerate}
\item Suppose that the underlying random walk $S$ satisfies upper heat kernel condition \eqref{UHK} with $d_{s}<2$ and \eqref{VG}. Then, there exist constants $C_1>0$ and $\beta_0>0$  such that  for any $\beta\in [0,\beta_0)$ \begin{align*}
 F_q(\beta)-F_a(\beta)\leq -C_2\beta^{\frac{4}{2-d_{s}}}.
\end{align*}
In particular, $\beta_1=\beta_2=0$.
\item In addition, we suppose that the underlying random walk $S$ satisfies lower heat kernel condition \eqref{LHK} with $d_{s}<2$. Then, there exists a constant $C_2>0$ such that  for any $\beta\in [0,\beta_0)$ \begin{align*}
-C_2 \beta^{\frac{4}{2-d_{s}}}\leq F_q(\beta)-F_a(\beta).
\end{align*}
\end{enumerate}

\end{thm}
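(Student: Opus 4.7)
The plan is to match both bounds against the crossover scale $\ell_\beta := \beta^{-4/(2-d_s)}$, singled out by the requirement that $\lambda_2(\beta)\cdot P_S^{\otimes 2,x}[L_{\ell_\beta}]$ be of order one, where $\lambda_2(\beta):=\lambda(2\beta)-2\lambda(\beta)=\beta^2 Q[\omega(1,0)^2]+O(\beta^3)$ and $L_n(S,S'):=\sum_{i=1}^n \mathbf{1}_{\{S_i=S'_i\}}$. The central computational identity is the replica formula
$$
Q\bigl[(W_n^x)^2\bigr]=P_S^x\otimes P_S^x\bigl[\exp\bigl(\lambda_2(\beta)\,L_n(S,S')\bigr)\bigr],\qquad W_n^x:=Z_n^x\,e^{-n\lambda(\beta)},
$$
which, combined with the diagonal heat-kernel estimate $P_S^x(S_{2n}=x)\asymp n^{-d_s/2}$ extracted from \eqref{UHK}/\eqref{LHK}, yields $P_S^{\otimes 2,x}[L_n]\asymp n^{1-d_s/2}$. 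Hence $\ell_\beta$ is the scale at which $Q[(W_{\ell_\beta}^x)^2]$ is bounded away from both $1$ and $\infty$. Both bounds then reduce to a one-block estimate on $Q[\log W_{\ell_\beta}^x]$ lifted to the free energy by the Markov property of $S$.

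\textbf{Lower bound (Part 2).} The target is $\inf_{x\in V}Q[\log W_{\ell_\beta}^x]\geq -C$ with $C$ independent of $\beta$. I would combine: (i) $\sup_x Q[(W_{\ell_\beta}^x)^2]\leq C_1$, from \eqref{UHK} and a Kahane-type bound on the exponential moment of $\lambda_2(\beta)L_{\ell_\beta}$; (ii) a Gaussian-type concentration estimate $Q(|\log W_{\ell_\beta}^x-Q[\log W_{\ell_\beta}^x]|\geq t)\leq\exp(-ct^2)$, obtained from an Efron--Stein / Gaussian-Poincar\'e inequality for $\omega\mapsto\log Z_n^x$ together with the bound $\beta^2\,P_{n,\beta}^{\otimes 2,x}[L_n]=O(1)$ at $n=\ell_\beta$ (where \eqref{LHK} enters to pin down the size of the polymer-measure intersections); (iii) the Paley--Zygmund bound $Q(W_{\ell_\beta}^x\geq 1/2)\geq c$, which is immediate from (i). If $Q[\log W_{\ell_\beta}^x]$ were more negative than a universal constant, (ii) and (iii) would conflict. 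To upgrade this to the free energy, the Markov decomposition $\log W_{n+m}^x=\log W_n^x+\log P_{n,\beta}^x[W_m^{S_n}\circ\theta_n]$, together with Jensen for $\log$ and the independence of $\omega|_{(n,n+m]}$ from $\omega|_{[1,n]}$, gives $Q[\log W_{k\ell_\beta}^0]\geq -Ck$ by iteration, hence $F_q-F_a\geq -C/\ell_\beta=-C\beta^{4/(2-d_s)}$.

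\textbf{Upper bound (Part 1).} Here the one-block target is $\sup_x Q[\log W_{\ell_\beta}^x]\leq -c$ with $c>0$ independent of $\beta$, converted to the free energy bound by a fractional-moment / coarse-graining scheme. For $\theta\in(0,1)$, Jensen gives $Q[\log W_n^x]\leq\theta^{-1}\log Q[(W_n^x)^\theta]$, so it suffices to prove $\sup_x Q[(W_n^x)^\theta]\leq\exp(-cn/\ell_\beta)$. Following the Giacomin--Lacoin--Toninelli / Nakashima strategy, I would (a) design a change of measure $d\widetilde Q_\beta/dQ=f_\beta(\omega)$ supported on a single block of length $\ell_\beta$, with bounded relative entropy and chosen to mimic the size-biased replica interaction, so that $\widetilde Q_\beta[W_{\ell_\beta}^x]\leq 1-\delta$ uniformly in $x$; H\"older then yields the one-block fractional moment estimate $\sup_x\sum_{y\in V}Q[(W_{\ell_\beta}^{x,y})^\theta]\leq 1-\delta'$; and (b) coarse-grain paths of length $n$ into blocks of length $\ell_\beta$, using the subadditivity of $a\mapsto a^\theta$ and the independence of $\omega$ across blocks, to get $Q[(W_n^x)^\theta]\leq(1-\delta')^{n/\ell_\beta}$.

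\textbf{Main obstacle.} The hard part is step (a) of the upper bound: on a general weighted graph one has no translation invariance or Fourier analysis to carry out the coarse-graining, and the density $f_\beta$ must be designed around the graph so as to produce a macroscopic reduction of $\widetilde Q_\beta[W_{\ell_\beta}^x]$ while keeping the relative entropy $\mathrm{Ent}_Q(f_\beta)$ bounded uniformly in $\beta$. The volume-growth assumption \eqref{VG} is needed to control the combinatorics of the coarse-grained sum over block-endpoints, and the fractional exponent $\theta$ must be tuned in terms of $d_s$ and the walk dimension implicit in \eqref{UHK} in order to absorb this combinatorial factor. Implementing this in the abstract sub-Gaussian framework of \cite{Bar}, while simultaneously achieving the sharp exponent $4/(2-d_s)$, is where most of the technical work will lie.
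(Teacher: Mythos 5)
Your upper-bound strategy (Part 1) matches the paper's: coarse-grain time into blocks of length $n\asymp\beta^{-4/(2-d_s)}$, take the fractional moment with $\theta=\tfrac12$, and perform a size-biasing change of measure on each block. Concretely, the paper tilts the environment downward by $\delta_n=(C_V'n(C_2n^{1/d_w})^{d_f})^{-1/2}$ on the time-space region $[in,(i+1)n]\times B(z_i,C_2n^{1/d_w})$ tracking the coarse-grained location $z_i$ of the walk, separates the relative-entropy cost from the tilted first moment via H\"older, and controls the combinatorial sum over block endpoints with a Vitali cover of $V$ by balls of radius $5n^{1/d_w}$ together with \eqref{VG} and the exit estimate \eqref{exit}. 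No translation invariance or Fourier analysis is used, so the ``main obstacle'' you flag does not in fact arise in the form you anticipated; the tilt is simply a uniform mean shift on a ball around $z_i$.

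Your lower-bound strategy (Part 2) is genuinely different from the paper's and has a real gap. The paper does not try to prove $\inf_x Q[\log W_n^x]\geq -C$ at scale $n\asymp\beta^{-4/(2-d_s)}$. Instead, following Alexander--Yildirim, it fixes an infinite path $\{x_k\}$ in $G$, builds a coarse-grained oriented-percolation lattice $\mathbb{L}_{CG}$ whose sites are declared open when the restricted one-block ratios $\widetilde W_{(I,J),\pm}^{(n)}$ exceed a fixed constant $\tilde{c}$, shows each site is open with $\mathcal{F}_{In}$-conditional probability $>1-\ve$ by a conditional second-moment plus Chebyshev estimate (in effect your steps (i) and (iii)), and then invokes the Liggett--Schonmann--Stacey domination theorem to produce an infinite open path, along which $W_{In}\geq\tilde{c}^{\,I}$. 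Your Markov/Jensen chaining step is correct, but the one-block input it requires is not established: step (ii) asserts a concentration bound $Q(|\log W_n^x-Q[\log W_n^x]|\geq t)\leq\exp(-ct^2)$ with $c$ \emph{independent of} $n$. The concentration that is actually available (the Liu--Watbled inequality, which the paper uses in the proof of Proposition \ref{propFE}) has fluctuation scale $\sqrt{n}$; feeding that into your Paley--Zygmund contradiction only gives $Q[\log W_n^x]\geq -C\sqrt{n}$, which is far too weak. To close the gap you would need $\mathrm{Var}_Q(\log W_n^x)=O(1)$ at the crossover scale, and the route you sketch --- a Poincar\'e inequality bounding the variance by $\beta^2$ times the expected polymer-measure overlap $Q\bigl[P_{n,\beta}^{\otimes 2,x}[L_n]\bigr]$ --- both presupposes a Gaussian (or log-Sobolev) disorder structure that the paper does not assume and leaves unproved a nontrivial bound on a random, disorder-dependent overlap. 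The paper's percolation argument is engineered precisely to avoid this: Lemma \ref{lem:Berdens} only requires the per-block second moment to be small relative to the first, a soft Chebyshev statement, rather than $O(1)$ concentration of the per-block logarithm.
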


For DPRE in $1+1$ ($d_f=1$ and $d_w=2$), the same upper bound (and lower bound for Gaussian environment) was obtained by H.~Lacoin \cite{Lac} and the same lower bound was obtained by K.S.~Alexander and G.~Yildirim \cite{AleYil}. 
\begin{rem}
  Cosco, Seroussi, and Zeitouni study the DPRE on infinite graphs independently \cite{CosSerZei}. They discuss the critical points of phase transitions for large classes of graphs. They also obtain an upper bound of asymptotics of free energy under a similar set of conditions independently.
  \end{rem}

\subsection{Heat kernel estimate}\label{sub:HK}

In this subsection, we review some selected facts on the random walk on graphs $G=(V,E)$.  

Let $B(x,n)=\{y\in V:d(x,y)\leq n\}$ be a ball centered at $x$ with radius $n$, where $d(x,y)$ is a graph distance of $x,y\in V$, that is $d(x,y)=\inf\{n\geq 0: \text{There exists a sequence $\{x_i\}_{i=0}^n\subset V$, $x=x_0,x_1,\cdots,x_n=y$ such that $\langle x_{i-1},x_i\rangle\in E$} \}$. For $x\in V$ and a nonempty subset $A,B\subset V$, we denote the distance between $x$ and $A$ by\begin{align*}
d(x,A):=\inf\{d(x,y):y\in A\}
\end{align*}
and the distance between $A$ and $B$ by \begin{align*}
d(A,B)=\inf\{d(x,y):x\in A,y\in B\}.
\end{align*}

Here, we list some properties of graphs and random walk. Let $d_f>0$ and $d_w>1$.
\begin{itemize}
\item (Upper heat kernel estimate) There exist constants $c_1,c_2>0$ such that for $n\geq 0$ and $x,y\in V$ with $d(x,y)\leq n$
\begin{align}
p_n(x,y)\leq \frac{c_1}{(n\vee 1)^{d_f/d_w}}\exp\left(-c_2\left(\frac{d(x,y)^{d_w}}{n\vee 1}\right)^{\frac{1}{d_w-1}}\right).\label{UHK}\tag{UHK}
\end{align}
\item (Lower heat kernel estimate) There exist constants $c_3,c_4>0$ such that for $n\geq 0$ and $x,y\in V$ with $d(x,y)\leq n$\begin{align}
\hspace{-4em}	\frac{c_3}{(n\vee 1)^{d_f/d_w}}\exp\left(-c_4\left(\frac{d(x,y)^{d_w}}{n\vee 1}\right)^{\frac{1}{d_w-1}}\right)
\leq p_n(x,y)+p_{n+1}(x,y)\label{LHK}\tag{LHK}
\end{align}  
\item  (Bounded geometry) The degree is bounded, that is \begin{align}
\sup_{x\in V}|N(x)|<\infty\tag{BG}\label{BG}
\end{align} 
\item (Volume growth) There exists a non-random constant $C_V>0$ s.t.~ \begin{align}
C_V^{-1}n^{d_f}\leq V(x,n)\leq C_Vn^{d_f} \quad \text{for $x\in V$ and $n\geq 1$},\tag{VG}\label{VG} 
\end{align}
where $V(x,n)$ be the volume growth function:\begin{align*}
V(x,n)=\mu(B(x,n))=\sum_{y\in B(x,n)}\mu_y\quad \text{for }x\in V,n\geq 1.
\end{align*}
\end{itemize}
When \eqref{UHK} and \eqref{LHK} are satisfied, $d_w>1$ is called the \emph{walk dimension} of $S$
and $d_{s}:=2d_{f}/d_{w}$ is called the \emph{spectral dimension} of $S$ \cite[Definition 4.14]{Bar}.


We say the weighted graph is strongly recurrent if the random walk $S$ satisfies \eqref{UHK} and \eqref{LHK} with $d_{f}<d_{w}$, or equivalently, $d_{s}=2d_{f}/d_{w}<2$. \cite[Section 1.1]{BarCouKum}

\begin{rem}
\begin{itemize}
\item \eqref{UHK} and \eqref{LHK} imply \eqref{BW}, \eqref{BG}, \eqref{VG} \cite[Lemma 4.17]{Bar}.
\item \eqref{BW} and \eqref{VG} imply that there exists a non-random constant $C_V'>0 $ s.t.~\begin{align}
{C_V'}^{-1}n^{d_f}\leq \tilde{V}(x,n)\leq C_V'n^{d_f}\quad \text{for }x\in V, n\geq 1,\label{eq:polgro}\tag{VG'}
\end{align}
where $\tilde{V}(x,n)=\sharp B(x,n)$ is the number of elements in the ball $B(x,n)$.
\end{itemize}
\end{rem}

\begin{lem}\label{lem:exit}{\cite[Lemma 4.21]{Bar}} We assume \eqref{UHK}. Then, there exist  constants $c_5,c_6$ such that for $t \geq 1$,
\begin{align}
P_x\left(S_n\not\in B(x,t n^{\frac{1}{d_w}})\right)&\leq  c_5\exp\left(-c_6t^{\frac{d_w}{d_w-1}}\right)\label{outside}\\
P_x\left(\tau(x,2t n^{\frac{1}{d_w}})<n\right)&\leq 2c_5\exp\left(-c_6t^{\frac{d_w}{d_w-1}}\right),\label{exit}
\end{align}
where $\tau(x,r)=\inf \{n\geq 0:S_n\not\in B(x,r)\}$ for $r\geq 1$.
\end{lem}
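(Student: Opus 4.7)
The plan is to establish \eqref{outside} directly from \eqref{UHK} together with \eqref{VG}, and then to derive \eqref{exit} from \eqref{outside} by a standard strong-Markov decomposition at the exit time. Throughout, $P_x(S_n=y)=p_n(x,y)\mu_y$, and on a graph $p_n(x,y)=0$ whenever $d(x,y)>n$.

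For \eqref{outside}, set $r:=tn^{1/d_w}$. The claimed bound is trivial unless $r\leq n$, so we may assume so. Decompose the admissible range of $y$ into dyadic annuli
\begin{equation*}
A_k=\{y\in V : 2^k r < d(x,y)\leq 2^{k+1}r\},\qquad k\geq 0.
\end{equation*}
By \eqref{VG} and \eqref{BW}, $\sum_{y\in A_k}\mu_y \leq C\,(2^{k+1}r)^{d_f}$, while \eqref{UHK} gives on $A_k$ the bound $p_n(x,y)\leq c_1 n^{-d_f/d_w}\exp\bigl(-c_2(2^k t)^{d_w/(d_w-1)}\bigr)$. Using $r^{d_f}/n^{d_f/d_w}=t^{d_f}$ and summing over $k\geq 0$,
\begin{equation*}
P_x\bigl(S_n\notin B(x,r)\bigr)\leq C\sum_{k\geq 0}(2^k t)^{d_f}\exp\bigl(-c_2(2^k t)^{d_w/(d_w-1)}\bigr).
\end{equation*}
Since $d_w/(d_w-1)>1$, for $t\geq 1$ the polynomial prefactor is absorbed into the stretched exponential and the series is controlled by its first term, yielding the required $c_5\exp\bigl(-c_6 t^{d_w/(d_w-1)}\bigr)$.

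For \eqref{exit}, set $r:=2tn^{1/d_w}$ and $T:=\tau(x,r)$, and split
\begin{equation*}
P_x(T<n)\leq P_x\bigl(d(S_n,x)>r/2\bigr)+P_x\bigl(T<n,\ d(S_n,x)\leq r/2\bigr).
\end{equation*}
The first term is estimated by \eqref{outside} applied with parameter $t$ (note $r/2=tn^{1/d_w}$). On the event in the second term one has $d(S_T,x)>r$ and $d(S_n,x)\leq r/2$, hence by the triangle inequality $d(S_n,S_T)>r/2$. The strong Markov property at $T$ therefore bounds the second term by
\begin{equation*}
\sup_{y\in V,\ 0\leq m\leq n} P_y\bigl(d(S_m,y)>r/2\bigr).
\end{equation*}
For each such $m$, writing $r/2=t'\,m^{1/d_w}$ with $t':=t(n/m)^{1/d_w}\geq t\geq 1$, the already established \eqref{outside} (applied with $t'$) gives a uniform-in-$m$ bound of $c_5\exp\bigl(-c_6 t^{d_w/(d_w-1)}\bigr)$, since $t'\geq t$. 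Adding the two contributions yields \eqref{exit} with constant $2c_5$.

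The one mildly delicate step is the dyadic sum in \eqref{outside}: one must absorb the $(2^k t)^{d_f}$ volume prefactor into the stretched-exponential tail, which works precisely because $d_w/(d_w-1)>1$ and requires $t\geq 1$ so that the $k=0$ term already supplies the stretched-exponential decay in $t$. The strong-Markov reduction in the second step is routine and merely contributes the factor $2$ in the constant.
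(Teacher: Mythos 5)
Your proof is correct, and it is essentially the proof in the cited reference (Barlow, \emph{Random walks and heat kernels on graphs}, Lemma 4.21), which is all the paper offers: a dyadic-annulus summation against \eqref{UHK} for \eqref{outside}, followed by the standard reflection argument via the strong Markov property at the exit time for \eqref{exit}. Two small remarks: the volume estimate $\sum_{y\in A_k}\mu_y\leq C(2^{k+1}r)^{d_f}$ follows from \eqref{VG} alone (no need to invoke \eqref{BW}), and in the strong-Markov step the supremum is really over $1\leq m\leq n$ — indeed over $m> r/2$, since $d(S_m,y)\leq m$ — which is where the requirement $t'\geq t\geq 1$ is automatically satisfied; both points are harmless and do not affect the argument.
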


\subsubsection{Example of strongly recurrent graphs}
Here, we will consider random walk on the  Sierpinski gasket graph as an example of strongly recurrent graphs. See \cite[Section2.9]{Bar}.

Let $O=(0,0)$, $a_0=(1,0)$, $b_0=({\frac{1}{2}},\frac{\sqrt{3}}{{2}})$. Let $G_0$ be the graph  which consists of the vertices $O$, $a_0$, and $b_0$ and the edges $\langle O,a_0\rangle$, $\langle O,b_0\rangle$, and $\langle a_0,b_0\rangle$. Then, we consider the sequence of graphs $\{G_n\}_{n=0}^\infty$ defined by 
\begin{align*}
G_{n+1}=G_n\cup (G_n+2^na_0)\cup (G_n+2^nb_0),
\end{align*}
where $A+a=\{x+a:x\in A\}$. Then, the graph $\displaystyle G=\bigcup_{n=0}^\infty G_n$ is   the Sierpinski gasket graph. 
\begin{center}
\begin{figure}[h]
\includegraphics[width=5in]{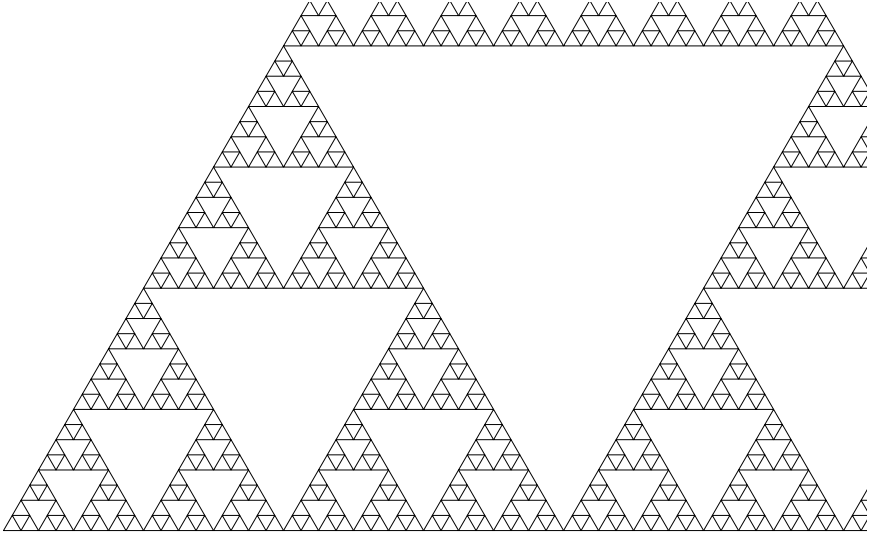}
\caption{Sierpinski gasket graph}
\end{figure} 
\end{center}
When we consider a simple random walk $S_{\textrm{SG}}$ on $G$, it is known that $S_{\textrm{SG}}$ satisfies \eqref{UHK} and \eqref{LHK} with $d_f=\frac{\log 3}{\log 2}$, $d_w=\frac{\log 5}{\log 2}$ and hence $d_{s}=\frac{\log 9}{\log 5}<1$ \cite[Corollary 6.11]{Bar}.

\subsection{Some remarks and Comments}

For DPRE in $1+1$, one of authors obtained a sharper asymptotics  as follows  \cite{Nak}. 
Suppose we assume \eqref{ass:omega} and $Q[\omega(n,x)^2]=1$ with a certain technical condition on $\omega$, then\begin{align}
\lim_{\beta\to 0}\frac{F_q(\beta)-F_a(\beta)}{\beta^4}=\lim_{T\to \infty}\frac{1}{T}\mathcal{P}[\log \mathcal{Z}_{T}]\label{eq:asymp}
\end{align} 
with $\mathcal{Z}_T=\dis \int_\mathbb{R}\mathcal{Z}(T,x)dx$, where $\mathcal{Z}(t,x)$ is a solution to  a stochastic heat equation \begin{align*}
\partial_t \mathcal{Z}=\frac{1}{2}\Delta \mathcal{Z}+\sqrt{2}\mathcal{Z}\dot{\mathcal{W}},\quad \lim_{t\to 0}\mathcal{Z}(t,x)dx=\delta (dx),
\end{align*}
where $\mathcal{W}$ is a time-space white noise on $[0,\infty)\times \mathbb{R}$.

To explain the rough idea of \eqref{eq:asymp}, we need the following theorem obtained by  T.~Alberts, K.~Khanin, and J.~Quastel\cite[Theorem 2.1]{AlbKhaQua}:
\begin{thma} 
Suppose \eqref{ass:omega} and $Q[\omega(n,x)^2]=1$. Then we have 
 \begin{align*}
\frac{Z_{Tn,\beta_n}^0}{Q\left[Z_{Tn,\beta_n}^0\right]}\Rightarrow \mathcal{Z}_T
\end{align*}
where $\beta_n=n^{-\frac{1}{4}}$.
\end{thma}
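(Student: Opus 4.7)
The plan is to expand the partition function in a polynomial chaos series and identify the limit term-by-term with the Wiener--It\^o chaos series of $\mathcal Z_T$, following the approach of Alberts--Khanin--Quastel. Set $\xi_n(i,x):=e^{\beta_n\omega(i,x)-\lambda(\beta_n)}-1$; under $Q[\omega^2]=1$ these are i.i.d., mean zero, with variance $\sigma_n^2=\beta_n^2(1+o(1))=n^{-1/2}(1+o(1))$. Expanding $\prod_{i=1}^{Tn}(1+\xi_n(i,S_i))$ and integrating out the walk gives the (in $L^2(Q)$) orthogonal decomposition
$$\frac{Z_{Tn,\beta_n}^0}{Q[Z_{Tn,\beta_n}^0]}=\sum_{k=0}^{Tn}\Psi_k^{(n)}(T),\quad \Psi_k^{(n)}(T):=\!\!\sum_{\substack{0<i_1<\cdots<i_k\le Tn\\ x_1,\ldots,x_k\in\mathbb Z}}\!\!\Bigl[\prod_{j=1}^{k}p_{i_j-i_{j-1}}(x_{j-1},x_j)\Bigr]\prod_{j=1}^{k}\xi_n(i_j,x_j),$$
with the convention $i_0=0$, $x_0=0$. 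The goal is $L^2(Q)$-convergence of this sum to $\mathcal Z_T=\sum_{k\ge 0}\mathcal I_k(T)$.

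First I would compute each $Q[\Psi_k^{(n)}(T)^2]$ exactly: by orthogonality of the $\xi_n$ and Parseval's identity $\sum_x p_m(y,x)^2=p_{2m}(y,y)$,
$$Q\bigl[\Psi_k^{(n)}(T)^2\bigr]=\sigma_n^{2k}\sum_{0<i_1<\cdots<i_k\le Tn}\prod_{j=1}^{k}p_{2(i_j-i_{j-1})}(0,0).$$
Rescaling $i_j=s_jn$ and invoking the local CLT $p_{\lfloor sn\rfloor}(0,0)\sim c/\sqrt{sn}$ (with $c$ absorbing the parity factor of simple random walk on $\mathbb Z$), the prefactor $\sigma_n^{2k}\cdot n^{k/2}$ equals $1+o(1)$ precisely because $\beta_n=n^{-1/4}$, and a Riemann-sum argument produces the limit $\int_{0<s_1<\cdots<s_k<T}\prod_j(4\pi(s_j-s_{j-1}))^{-1/2}\,ds=\|\mathcal I_k(T)\|_{L^2}^2$. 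This is the reason the intermediate disorder scaling is critical.

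Next, for each fixed $K$, I would upgrade the second-moment calculation to joint distributional convergence $(\Psi_0^{(n)},\ldots,\Psi_K^{(n)})\Rightarrow(\mathcal I_0,\ldots,\mathcal I_K)$ by appealing to a central limit theorem for polynomial chaoses in i.i.d.\ variables (e.g.\ de~Jong's theorem, or the Lindeberg principle of Mossel--O'Donnell--Oleszkiewicz). The required inputs are the pointwise convergence of the rescaled kernels $n^{k/2}p_{\lfloor s_jn\rfloor-\lfloor s_{j-1}n\rfloor}(\lfloor y_{j-1}\sqrt n\rfloor,\lfloor y_j\sqrt n\rfloor)\to\prod_j\rho(s_j-s_{j-1},y_j-y_{j-1})$, which follows from the local CLT, together with the $L^2$ matching from the previous step.

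Finally, to pass from each fixed chaos level to the full series I would establish a uniform tail bound $\sup_n\sum_{k\ge K}Q[\Psi_k^{(n)}(T)^2]\to 0$ as $K\to\infty$, using the elementary inequality $\sum_x p_m(0,x)^2\le C(m\vee1)^{-1/2}$ (valid for all $m\ge1$) together with the summability of $\sum_k\|\mathcal I_k(T)\|_{L^2}^2<\infty$ in dimension one. The main obstacle will be making the Riemann-sum step uniform across the whole simplex: the local CLT degrades when consecutive times coincide, so one must combine it with the sup bound $p_m(x,y)\le C/\sqrt m$ and keep track of SRW parity to cover the near-diagonal regions $i_j\approx i_{j-1}$. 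Combining the three steps yields $L^2(Q)$-convergence of the normalized partition function to $\mathcal Z_T$, hence convergence in distribution.
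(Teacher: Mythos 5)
The paper does not actually prove Theorem A: it is quoted from Alberts--Khanin--Quastel \cite{AlbKhaQua} (their Theorem 2.1) purely as motivation for the heuristic derivation of \eqref{eq:asymp} and for Conjectures A and B, so there is no in-paper proof to compare your sketch against. That said, your outline faithfully reconstructs the AKQ argument and would, if fleshed out, give a correct proof: the polynomial chaos (high-temperature) expansion in the centred multiplicative weights $\xi_n$, the exact second-moment identity via Parseval $\sum_x p_m(y,x)^2=p_{2m}(y,y)$ together with the local CLT, the observation that the intermediate-disorder scaling $\beta_n=n^{-1/4}$ makes $\sigma_n^{2k}\cdot n^{k/2}\to 1$ so the Riemann sums converge to the Wiener chaos norms, a multilinear CLT (de Jong or the Lindeberg exchange of Mossel--O'Donnell--Oleszkiewicz; AKQ phrase it as a $U$-statistics convergence theorem) to upgrade to joint distributional convergence of finitely many chaos levels, and a uniform-in-$n$ tail bound in the chaos order $k$ using $\sup_x p_m(0,x)\le Cm^{-1/2}$. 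One imprecision in your last sentence: $\mathcal Z_T$ is not a random variable on $(\Omega,\mathcal F,Q)$, so you cannot assert $L^2(Q)$-convergence of $Z_{Tn,\beta_n}^0/Q[Z_{Tn,\beta_n}^0]$ to $\mathcal Z_T$; what your three steps actually deliver is joint weak convergence $(\Psi_0^{(n)},\ldots,\Psi_K^{(n)})\Rightarrow(\mathcal I_0,\ldots,\mathcal I_K)$ for each $K$, together with the uniform $L^2$ tail bound $\sup_n\sum_{k>K}Q[(\Psi_k^{(n)})^2]\to0$, and these two facts give convergence in distribution of the full series, which is the theorem's actual conclusion.
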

 Then, \eqref{eq:asymp} implies roughly  that\begin{align*}
(LHS)=\lim_{n\to \infty}\frac{1}{\beta_n^4}\lim_{T\to \infty}\frac{1}{Tn}Q\left[\log \frac{Z_{Tn,\beta_n}^0}{Q\left[Z_{Tn,\beta_n}^0\right]}\right]
=\lim_{T\to \infty}\lim_{n\to \infty}\frac{1}{T}Q\left[\log \frac{Z_{Tn,\beta_n}^0}{Q\left[Z_{Tn,\beta_n}^0\right]}\right]=(RHS),
\end{align*}
that is \eqref{eq:asymp} means the interchangeability of two limits in $n$  and $T$. In several discrete disordered systems, it is known that such interchangeability of limits holds \cite{BoldenH,CarGia,CarTonTor}


Thus, we have the following conjectures. 
\begin{conja}\label{conja}
Suppose the underlying random walk satisfies the local limit theorem in the sense discussed in \cite{CroHam}. Then, we have for each $T>0$\begin{align*}
\frac{Z^0_{Tn,\beta_n}}{Q\left[Z_{Tn,\beta_n}^0\right]}\Rightarrow \mathcal{Z}_T,
\end{align*} 
with $\beta_n=n^{-\frac{d_w-d_f}{2d_w}}$,  $\mathcal{Z}_T=\int_{\bar{V}}\mathcal{Z}_{T,x}\mu(dx)$, where $\mathcal{Z}_{t,x}$ satisfies the \begin{align}
\mathcal{Z}_{t,x}=p_t(x)+\hat\beta\int_0^t\int_{\bar{V}}\left( \int_{\bar{V}}p_{t-s}(x-z)\mathcal{Z}_{s,z-y}\mu(dz)\right)\mathcal{W}(ds,dy),			\label{SHEfra}
\end{align}
where $\hat{\beta}$ is a constant depends on underlying random walk and  $p_t(x)$ is a heat kernel of Brownian motion in fractal graph and $\mathcal{W}$ is a white noise on $(\bar{V},\mu)$.
\end{conja}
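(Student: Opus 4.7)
The plan is to adapt the polynomial chaos framework of Alberts--Khanin--Quastel \cite{AlbKhaQua} to the fractal setting. One starts from the exact chaos expansion
\begin{align*}
\frac{Z^0_{Tn,\beta_n}}{Q[Z^0_{Tn,\beta_n}]}
=\sum_{k=0}^{Tn}\,\sum_{\substack{0<i_1<\cdots<i_k\le Tn\\ x_1,\ldots,x_k\in V}}
p_{i_1}(0,x_1)\prod_{j=2}^{k}p_{i_j-i_{j-1}}(x_{j-1},x_j)\prod_{j=1}^{k}\eta_n(i_j,x_j),
\end{align*}
with $\eta_n(i,x):=e^{\beta_n\omega(i,x)-\lambda(\beta_n)}-1$, so that $Q[\eta_n]=0$ and $Q[\eta_n^2]\asymp\beta_n^2$ for small $\beta_n$. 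The exponent $(d_w-d_f)/(2d_w)$ in $\beta_n$ is the unique choice that keeps each fixed-order chaos of order one in $L^2$: using independence, reversibility, and the on-diagonal bound $p_{2m}(x,x)\asymp m^{-d_f/d_w}$ from \eqref{UHK}--\eqref{LHK}, one checks that the variance of the $k$th term is comparable to $\beta_n^{2k}\,n^{k(1-d_f/d_w)}\,\Gamma(1-d_f/d_w)^k/\Gamma(k(1-d_f/d_w)+1)$, which is $O(1)$ precisely when $\beta_n^2\asymp n^{-(d_w-d_f)/d_w}$. The factorial gain from the simplex of ordered times (via Dirichlet's integral), together with $d_s<2$, furnishes the uniform-in-$n$ $L^2$ tail bound needed to truncate the chaos series at a finite level.

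The second step is to identify the limit of each fixed-order chaos with the corresponding Wiener chaos of the mild solution of \eqref{SHEfra}. Here one invokes the local limit theorem in the sense of Croydon--Hambly \cite{CroHam}: after rescaling time by $n$ and space by $n^{1/d_w}$, the transition kernel $n^{d_f/d_w}p_{\lfloor tn\rfloor}(0,y)$ converges to the heat kernel $p_t$ on the fractal $\bar V$, jointly with the convergence of the rescaled centred environment to space--time white noise $\hat\beta\,\mathcal{W}(dt,dy)$ against $\mu$. Summing the Riemann--type discretisation over the time simplex produces the iterated stochastic integral $\hat\beta^{k}\int p_{t_1}(-y_1)\prod_{j\ge 2}p_{t_j-t_{j-1}}(y_{j-1}-y_j)\,\mathcal{W}^{\otimes k}(d\vec t\,d\vec y)$, which is precisely the $k$th Wiener chaos of $\mathcal{Z}_T$. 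Combined with the tail bound from the first step, this would give the asserted convergence $Z^0_{Tn,\beta_n}/Q[Z^0_{Tn,\beta_n}]\Rightarrow\mathcal{Z}_T$.

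The main obstacle is not the chaos bookkeeping but the rigorous construction of the limiting object $\mathcal{Z}_T$ itself: one must develop a Walsh--type stochastic calculus for space--time white noise on $(0,\infty)\times(\bar V,\mu)$ and establish existence and uniqueness of the mild solution of \eqref{SHEfra} together with convergence of its Wiener chaos expansion in $L^2$. This is essentially an analysis--on--fractals problem, relying on sharp on-diagonal estimates for $p_t$ and on regularity of the limiting Brownian motion on $\bar V$; the finiteness of each chaos norm again requires $d_s<2$, matching the strong recurrence hypothesis. A secondary but delicate difficulty is upgrading the Croydon--Hambly local limit theorem to the functional form needed for joint convergence of noise and heat kernel on the rescaled grid. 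Once these two ingredients are in place, the term-by-term convergence of the discrete chaos and the passage to the limit under the sum become a fractal analogue of the $1+1$ argument of \cite{AlbKhaQua}.
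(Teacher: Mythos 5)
This statement is labelled as a \emph{Conjecture} in the paper, and the paper offers no proof of it; there is therefore no argument of the authors to compare your attempt against. What you have written is, correctly, not a proof but a strategy: you reproduce the Alberts--Khanin--Quastel polynomial chaos framework, verify that the exponent $(d_w-d_f)/(2d_w)$ is the unique scaling that makes each fixed chaos order of size $O(1)$ in $L^2$, note that $d_s<2$ together with the Dirichlet/simplex factor controls the tail of the series, and then explicitly flag the two open ingredients --- the construction of a Walsh-type stochastic calculus and a well-posed mild solution of \eqref{SHEfra} on $(\bar V,\mu)$, and a functional upgrade of the Croydon--Hambly local limit theorem giving joint convergence of the rescaled heat kernel and noise. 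These are precisely the reasons the authors leave the statement as a conjecture rather than a theorem, so your diagnosis of where the difficulty lies is accurate; but as a \emph{proof} the attempt is incomplete by its own admission, and the two ingredients you defer are the whole substance of the problem, not technical footnotes.

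One further point worth raising if you pursue this: the mild formulation \eqref{SHEfra} as written uses difference notation $p_{t-s}(x-z)$ and $\mathcal{Z}_{s,z-y}$, which presupposes a translation structure on $\bar V$. On a generic fractal limit space there is none, and the correct mild equation should be phrased with a two-variable kernel $p_{t-s}(x,z)$ and with $\mathcal{Z}_{s,z}$ in place of $\mathcal{Z}_{s,z-y}$; your Wiener chaos identification in the second step implicitly does this (you write $p_{t_j-t_{j-1}}(y_{j-1}-y_j)$, which should likewise be $p_{t_j-t_{j-1}}(y_{j-1},y_j)$). Getting this bookkeeping right matters, because uniqueness of the mild solution and the matching of chaos coefficients both depend on the precise form of the kernel. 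It would also be prudent to state explicitly that the convergence asserted is in distribution with respect to $Q$, and to specify the topology in which the rescaled environment converges to $\hat\beta\,\mathcal{W}$, since the coupling of the local limit theorem with the noise convergence is exactly the delicate joint-convergence step you identify.
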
 

\begin{rem}
Simple random walk on Sierpinski gasket graph satisfies local limit theorem\cite{CroHam}.
\end{rem}

\begin{rem}
In \cite{HinRocTep,HamYan},  SPDE on measure space equipped with gradient and divergence are discussed and they prove the existence and uniqueness of solutions. 
\end{rem}

\begin{conjb} 
Under the assumption in Conjecture \ref{conja}, we have that \begin{align*}
\lim_{\beta\to 0}\beta^{-\frac{4}{2-d_s}}(F_q(\beta)-F_a(\beta))=\lim_{T\to \infty}\frac{1}{T}\mathcal{P}\left[\log \mathcal{Z}_T\right].
\end{align*}
\end{conjb}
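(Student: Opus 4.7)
The plan is to adapt the interchange-of-limits strategy used for the $1+1$-dimensional DPRE in \cite{Nak}, combining Conjecture A with a superadditivity argument. The exponent relation $2-d_{s}=2(d_{w}-d_{f})/d_{w}$ gives $\beta_{N}^{-4/(2-d_{s})}=N$ for $\beta_{N}=N^{-(d_{w}-d_{f})/(2d_{w})}$. Writing $W_{n,\beta}=Z_{n,\beta}^{0}/Q[Z_{n,\beta}^{0}]$ and using Proposition~\ref{propFE}, one has by definition
\begin{align*}
\beta_{N}^{-4/(2-d_{s})}\bigl(F_{q}(\beta_{N})-F_{a}(\beta_{N})\bigr)=\lim_{T\to\infty}\frac{1}{T}Q\bigl[\log W_{TN,\beta_{N}}\bigr],
\end{align*}
so the conjecture reduces to exchanging the $N\to\infty$ and $T\to\infty$ limits of $\tfrac{1}{T}Q[\log W_{TN,\beta_{N}}]$ and identifying the inner $N\to\infty$ limit at fixed $T$ as $\tfrac{1}{T}\mathcal{P}[\log\mathcal{Z}_{T}]$.

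For fixed $T>0$, Conjecture A yields $W_{TN,\beta_{N}}\Rightarrow\mathcal{Z}_{T}$ in distribution as $N\to\infty$, and the first step is to upgrade this to convergence of $Q[\log W_{TN,\beta_{N}}]$ by establishing uniform integrability of $\{\log W_{TN,\beta_{N}}\}_{N}$. The upper tail is controlled via an $L^{2}$ bound: expanding $Q[W_{TN,\beta_{N}}^{2}]$ as a sum over two independent copies $(S,S')$ of the walk and applying \eqref{UHK}, the collision series $\sum_{n}p_{2n}(0,0)$ is summable precisely because $d_{s}<2$, which produces $\sup_{N}Q[W_{TN,\beta_{N}}^{2}]<\infty$ and in fact convergence to $\mathcal{P}[\mathcal{Z}_{T}^{2}]$. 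The lower tail is the delicate part: I would combine a Paley--Zygmund bound on $Q(W_{TN,\beta_{N}}>c)$ obtained from this $L^{2}$ estimate with a concentration inequality of Efron--Stein or exponential-martingale type, giving uniform control of $Q(|\log W_{TN,\beta_{N}}-Q[\log W_{TN,\beta_{N}}]|>s)$.

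To exchange the limits, the key tool is standard superadditivity $Q[\log Z_{n+m,\beta}^{0}]\geq Q[\log Z_{n,\beta}^{0}]+Q[\log Z_{m,\beta}^{0}]$ (obtained from the Markov property of $S$ combined with Jensen's inequality and \eqref{VG} to absorb boundary corrections), which gives $\tfrac{1}{n}Q[\log W_{n,\beta}]\leq F_{q}(\beta)-F_{a}(\beta)$ for every $n$ and hence
\begin{align*}
\frac{1}{T}Q[\log W_{TN,\beta_{N}}]\leq N\bigl(F_{q}(\beta_{N})-F_{a}(\beta_{N})\bigr).
\end{align*}
Letting $N\to\infty$ at fixed $T$ (via Step~1) and then $T\to\infty$ gives $\mathcal{F}_{\infty}:=\lim_{T\to\infty}\tfrac{1}{T}\mathcal{P}[\log\mathcal{Z}_{T}]\leq\liminf_{N\to\infty}N(F_{q}-F_{a})(\beta_{N})$, provided $\mathcal{F}_{\infty}$ exists. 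The existence of $\mathcal{F}_{\infty}$ itself must be verified separately, by establishing an analogous continuum superadditivity for $\mathcal{Z}_{T}$ based on a fractal Markov structure for the mild solution of \eqref{SHEfra}. The reverse inequality $\limsup_{N\to\infty}N(F_{q}-F_{a})(\beta_{N})\leq\mathcal{F}_{\infty}$ then follows from a diagonal argument: for each $N$ choose $T_{N}\to\infty$ slowly enough that $\tfrac{1}{T_{N}}Q[\log W_{T_{N}N,\beta_{N}}]\to\mathcal{F}_{\infty}$ (by the fixed-$T$ convergence) and simultaneously approaches $N(F_{q}(\beta_{N})-F_{a}(\beta_{N}))$ (by superadditivity); this requires quantifying the rate in the superadditive convergence uniformly in $\beta_N$, which is where concentration estimates re-enter.

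The main obstacle is the lower-tail uniform integrability of $\log W_{TN,\beta_{N}}$: existing concentration inequalities for directed polymers exploit features of $\mathbb{Z}^{d}$---polynomially controlled local geometry and explicit path regularity---that require nontrivial adaptation to strongly recurrent fractal graphs, and in their absence the $L^{2}$ bound alone cannot rule out rare, large negative values of $\log W$. A secondary but substantial obstacle is the existence and analysis of $\mathcal{F}_{\infty}$, which presupposes a well-developed theory of the fractal SHE \eqref{SHEfra}---well-posedness, a Markov/semigroup structure, chaos expansion, and moment bounds---still under active development for strongly recurrent ambient spaces \cite{HinRocTep,HamYan}. These are also the obstructions that keep Conjecture A itself open beyond a handful of specific fractal examples.
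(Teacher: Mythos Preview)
The paper does not prove this statement: it is explicitly labeled Conjecture~B and appears in the ``Some remarks and Comments'' subsection with only a heuristic justification. The paper's heuristic is precisely the interchange-of-limits idea you describe---write $\beta_{N}^{-4/(2-d_{s})}(F_{q}-F_{a})(\beta_{N})$ as $\lim_{T\to\infty}\tfrac{1}{T}Q[\log W_{TN,\beta_{N}}]$, swap the $N$ and $T$ limits, and identify the inner limit via Conjecture~A---so at the level of strategy your proposal agrees with the paper's informal discussion. Your proposal is, in effect, a more detailed roadmap of the same program, and you correctly flag the two genuine obstructions (lower-tail uniform integrability and the underdeveloped theory of the fractal SHE).

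There is, however, one concrete error in your sketch that the paper itself warns against. You invoke ``standard superadditivity $Q[\log Z_{n+m,\beta}^{0}]\geq Q[\log Z_{n,\beta}^{0}]+Q[\log Z_{m,\beta}^{0}]$'' as the key tool for one direction of the exchange. The paper states explicitly at the start of Section~2 that ``$\{Q[\log Z_{n,\beta}^{x}]\}_{n=1}^{\infty}$ is not superadditive in our case due to loss of the shift invariance of the underlying random walk.'' On a general graph the law of $Z_{m}^{y}$ depends on $y$, so the usual Jensen argument on $\mathbb{Z}^{d}$ breaks down and cannot be repaired by \eqref{VG}-type boundary corrections alone. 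The paper's workaround for establishing the free energy is to pass to the point-to-point quantities $Z_{n}^{x,x}$, which \emph{are} superadditive, and then compare back via the estimate \eqref{plbdd}. Any rigorous version of your exchange-of-limits argument would need to route through an analogous device, and the resulting correction terms would have to be controlled uniformly in $\beta_{N}$---an additional layer of difficulty you have not accounted for.
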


\section{The existence of the free energy}
In this section, we prove Proposition \ref{propFE} and   we assume \eqref{eq:subexp} and \eqref{BW}.

In \cite{ComShiYos2},  the existence of the limit $\dis \lim_{n\to\infty}\frac{1}{n}Q[\log Z_{n,\beta}^x]$ follows from superadditivity of $\{Q[\log Z_{n,\beta}^x]\}_{n=1}^\infty$, but we should remark that  $\{Q[\log Z_{n,\beta}^x]\}_{n=1}^\infty$ is not superadditive  in our case  due to loss of the shift invariance of the underlying random walk. 

Letting $Z^{x,y}_{n,\beta}$ be the point to point partition function \begin{align*}
Z^{x,y}_{n,\beta}=Z_{n}^{x,y}:=P_x\left[\exp\left(\beta \sum_{i=1}^n \omega(i,S_i)\right):S_n=y\right],
\end{align*} 
then $\{Q[\log Z^{x,x}_{n}]\}_{n=1}^\infty$ is superadditive  and hence the limit $\lim_{n\to\infty}\frac{1}{n}Q[\log Z^{x,x}_{n}]=\sup_{n\geq 1}\frac{1}{n}Q[\log Z^{x,x}_{n}]$ exists. 

\begin{proof}[Proof of Proposition \ref{propFE}]
Theorem 6.1 \cite{LiuWat}  combined with  \cite[Proposition 1.5]{ComShiYos2} gives  the concentration inequality for $\log Z_{n}^{x,y}$ and
 $\log Z_n^x$: There exists a constant $K>0$ such that for any $n\geq 1$, $t\in (-1,1)$, $\delta>0$, and $x,y\in V$
 \begin{align}
Q\left[\exp\left(t (\log Z^x_{n}-Q[\log Z^x_n])\right)\right]&\leq \exp\left(\frac{nKt^2}{1-|t|}\right)\label{concent}
\intertext{and}
Q\left(\left|\log Z_n^x-Q[\log Z_n^x]\right|>n\delta \right)&\leq \exp\left(-n\left(\sqrt{\delta+K}-\sqrt{K}\right)^2\right),\notag
\end{align}
and the same inequality holds even if $Z_n^x$ is replaced by $Z_n^{x,y}$.

Thus, it is enough to show  the convergence of $\frac{1}{n}Q[\log Z_n^x]$ and independence of $x\in V$. We will  show that \begin{align*}
\lim_{n\to\infty}\frac{1}{n}Q[\log Z_n^x]=\lim_{n\to\infty}\frac{1}{n}Q[\log Z^{x,x}_{n}]
\end{align*}
by the same argument as in \cite[Proposition 2.4]{CarHu3}: $Z_{n}^x\geq Z^x_{n,x}$ is trivial so that \begin{align*}
\lim_{n\to\infty}\frac{1}{n}Q[\log Z^{x,x}_{n}]\leq \varliminf_{n\to\infty}\frac{1}{n}Q[\log Z^x_n].
\end{align*}
Also, $Q[\log Z^{x,x}_{2n}]\geq Q[\log Z_{n}^{x,y}]+Q[\log Z_{n}^{y,x}]$ with \eqref{BW} implies that there exists a constant $c>0$ such that for any $x,y\in V$\begin{align}\label{eq:zxyzxx}
Q[\log Z^{x,x}_{2n}]\geq 2Q[\log Z_{n}^{x,y}]-c.
\end{align} 
On the other hand, we have that for any $0<t<1$ 
\begin{align}
\frac{1}{n}Q[\log Z^x_n]&\leq \frac{1}{t n}\log Q\left[\left(Z_{n}^x\right)^t\right] \hspace{10em}(\text{Jensen's inequality})\notag\\
&\leq \frac{1}{tn}\log Q\left[\sum_{y\in V}\left(Z_{n,y}^x\right)^t\right]\notag\\
&= \frac{1}{tn}\log \left(\sum_{y\in V} Q\left[\exp\left(t\left(\log Z_{n}^{x,y}-Q[\log Z_{n}^{x,y}]\right)\right)\right]\exp\left(tQ[\log Z_{n}^{x,y}]\right)\right)			\notag\\
&\leq \frac{1}{tn} \log Q\left[\sum_{y\in V}\exp\left(t \left(\log Z_{n}^{x,y}-Q[\log Z^{x,y}_{n}]\right)\right)\right]+\frac{1}{2n}Q[\log Z_{2n}^{x,x}]+\frac{c}{2nt}\notag\\
&\leq \frac{1}{tn}\left(\log \tilde{V}(x,n)+\frac{nKt^2}{1-t}\right)+\frac{1}{2n}Q[\log Z_{2n}^{x,x}]+\frac{c}{2nt},\label{plbdd}
\end{align}
where we used Jensen's inequality  in the first line, the fact $(a+b)^\theta\leq a^\theta+b^\theta$ for $a,b\geq 0$ and $\theta\in (0,1)$ in the second line, \eqref{eq:zxyzxx} in the fourth line, and  \eqref{concent}  in the last line.

Taking $n\to\infty$ and then $t\to 0$, we obtain from \eqref{eq:subexp} that \begin{align*}
\varlimsup_{n\to\infty}\frac{1}{n}Q[\log Z_n^x]\leq \lim_{n\to\infty }\frac{1}{2n}Q[\log Z_{2n}^{x,x}]
\end{align*}
and thus $\lim_{n\to\infty}\frac{1}{n}Q[\log Z_n^x]$ exists. The independence of $x$ for $F_{q,x}(\beta)$ follows by $Q[\log Z^x_{n+d(x,y)}]\geq Q[\log  Z^y_n]$.

\end{proof}

\section{Upper bound}
In this section, we prove Theorem \ref{main1} (1).
Throughout this section, we assume \eqref{BW}, \eqref{VG} (which imply \eqref{eq:polgro}), \eqref{UHK}.

We will prove the upper bound by the coarse graining scheme and change of measure method used in several polymer models \cite{AleZyg,BolCarTil,BoldenH,Lac,DerGiaLacTon,Ton}.

\subsection{Coarse graining and change of measure}

We define $W_{n,\beta}^{x}=Z_{n,\beta}^{x}/Q[Z_{n,\beta}^{x}]=P_S^x\left[e_{n}\right]$ where 
\begin{align}
e_n=e_n(\beta,\omega,S):=\exp\left(\beta H_n(S)-n\lambda(\beta)\right)
\end{align}
 and we denote by $\{\mathcal{F}_n:n\geq 0\}$ the filtration generated by $\omega(n,x)$: $\mathcal{F}_0=\{\Omega,\emptyset\}$ and $\mathcal{F}_n=\sigma[\omega(i,x): 1\leq i\leq n, x\in V]$ for $n\geq 1$.

We remark from Proposition \ref{propFE} that \begin{align}
F_q(\beta)-F_a(\beta)=\lim_{n\to\infty}\frac{1}{n}Q[\log W_n^0(\beta)]=\lim_{N\to\infty}\frac{1}{nN}Q[\log W_{nN}^0(\beta)].\label{eq:renFE}
\end{align}
Therefore, we will estimate the RHS in \eqref{eq:renFE} in the proof.

For small $\beta>0$, let $n$ be the smallest integer bigger than $h_V(\beta)$, where $h_V$ is a decreasing function on $(0,{\beta_0})$ with $\lim_{\beta\to+0}h_V(0)=\infty$ and given explicitly later.  First, we consider a covering of graph by balls with radius $n_w=n^{\frac{1}{d_w}}$. From Vitali's covering lemma, we can find  a set of verticies  $I^{(n)}=\{y_{i}^{(n)}\}_{i\in \mathbb{N}}$ such that 
\begin{enumerate}[(1)]
\item $B(y_i^{(n)},n_w)\cap B(y_j^{(n)},n_w)=\emptyset$ for $i\not=j$.
\item $\dis \bigcup_{i}B(y_i^{(n)},5n_w)=V$.
\end{enumerate}
Denoting by  $\mathcal{N}_i^{(n)}(R)$ the  elements in $I^{(n)}$ which lie in the ball $B(y_i^{(n)},Rn_w)$ for $R\geq 1$, we have \begin{align*}
\bigcup_{j\in \mathcal{N}_i^{(n)}}B(y_j^{(n)},n_w)\subset B(y_i^{(n)},(R+1)n_w)
\end{align*}
and hence \eqref{eq:polgro} implies that there exists a constant $C_C>0$ such that  \begin{align}\sharp \mathcal{N}_i^{(n)}(R)\leq C_CR^{d_f}\label{eq:ballbdd}
\end{align}  for any $n\geq 1$ and $y_{i}^{(n)}\in I^{(n)}$.


Jensen's inequality implies that for each $0<\theta<1$ \begin{align*}
\frac{1}{nN}Q[\log W_{nN}^0(\beta)]\leq \frac{1}{\theta nN}\log Q\left[W_{nN}^0(\beta)^\theta\right]	.	
\end{align*}
We will prove that there exists an $m>0$ such that \begin{align}
 Q\left[W_{nN}^0(\beta)^\theta\right]\leq 	e^{-mN}\label{eq:conc}
 \end{align} so that \begin{align*}
\varlimsup_{n\to \infty}h_V(\beta)\left(F_q(\beta)-F_a(\beta)\right)\leq -\frac{m}{\theta }.
\end{align*}

To estimate $W_{nN}^0(\beta)^\theta$, we focus on the balls which the underlying random walk passes through in each $m=n,2n,3,\cdots,nN$:  \begin{align*}
W_{nN}^0(\beta)&=P_0\left[\exp\left(\beta H_{nN}(S,\omega)-nN\lambda(\beta)\right)\right]\\
&\leq \sum_{z_1,\cdots,z_N\in I^{(n)}}P_0\left[\exp\left(\beta H_{nN}(S,\omega)-nN\lambda(\beta)\right): S_{in}\in B(z_i,5n_w), i=1,\cdots,N \right].
\end{align*}
Therefore, we have that  \begin{align}
Q\left[W_{nN}^0(\beta)^\theta\right]&\leq \sum_{z_1,\cdots,z_N\in I^{(n)}}Q\left[P_0\left[\exp\left(\beta H_{nN}(S,\omega)-nN\lambda(\beta)\right): S_{in}\in B(z_i,5n_w), i=1,\cdots,N \right]^\theta\right],\label{eq:sumt}
\end{align}
where we used the fact $(a+b)^\theta \leq a^\theta+b^\theta$ for $a,b\geq 0$ and $\theta\in (0,1)$. For simplicity of notation, we denote by \begin{align*}
W_{nN}(Z)=P_0\left[\exp\left(\beta H_{nN}(S,\omega)-nN\lambda(\beta)\right): S_{in}\in B(z_i,5n_w), i=1,\cdots,N \right].
\end{align*}

We will take $h_V(\beta)=C_1\beta^{-\frac{4}{2-d_s}}$, where $C_1>0$ will be chosen large later. Then, we have that \begin{align}
C_1\beta^{-\frac{4}{2-d_s}}\leq n<C_1\beta^{-\frac{4}{2-d_s}}+1\label{norder}
\end{align}

For each $Z=(z_1,\cdots,z_N)\in \left(I^{(n)}\right)^N$, we introduce a new probability measure which has a Radon-Nikodym derivative \begin{align*}
\frac{dQ_{Z}}{dQ}&=\exp\left(-\sum_{(l,x)\in J_Z}\left(\delta_{n}\omega(l,x)-\lambda(-\delta_{n})\right)\right)\\
&=\prod_{i=0}^{N-1}\exp\left(-\sum_{(l,x)\in J_{i,z_i}}\left(\delta_n \omega(l,x)-\lambda (-\delta_n)\right)\right),
\end{align*}
where  \begin{align*}
&J_Z=\left\{(in+k,x)\in \N\times V:i=0,\cdots,N-1,k=1,\cdots,n, x\in B(z_{i-1},C_2n_w) \right\}\\
&J_{i,z_i}=\{(in+k,x)\in \N\times V: k=1,\cdots,n, x\in B(z_{i-1},C_2n_w)\}
\end{align*}
with $z_0=0$, $C_2$ will be chosen large later, and $\delta_{n}=(C_V'n(C_2n_{w})^{d_f})^{-\frac{1}{2}}$. 

Then, H\"older's inequality yields that \begin{align}
&Q\left[P_0\left[e_{nN}: S_{in}\in B(z_i,5n_w), i=1,\cdots,N \right]^\theta\right]\notag\\
&=Q_Z\left[\frac{dQ}{dQ_Z}P_0\left[e_{nN}: S_{in}\in B(z_i,5n_w), i=1,\cdots,N \right]^\theta\right]\notag\\
&\leq Q_Z\left[\left(\frac{dQ}{dQ_Z}\right)^\frac{1}{1-\theta}\right]^{1-\theta} Q_Z\left[P_0\left[e_{nN}: S_{in}\in B(z_i,5n_w), i=1,\cdots,N \right]\right]^\theta.\label{eq:QZbdd}
\end{align}

Set $\theta=\frac{1}{2}$. Then, we know that \begin{align}
Q_Z\left[\left(\frac{dQ}{dQ_Z}\right)^2\right]&= Q\left[\exp\left(\sum_{(l,x)\in J_Z}\left(\delta_n\omega(l,x)+\lambda(-\delta_n)\right)\right)\right]\notag\\
&=\exp\left(\sum_{(l,x)\in J_Z}(\lambda(\delta_n)+\lambda(-\delta_n))\right)\notag\\
&= \exp\left(n\sum_{i=0}^{N-1}\tilde{V}(z_i,C_2n_w)(\lambda(\delta_n)+\lambda(-\delta_n))\right)\notag\\
&\leq \exp\left(NC_V'C_2^{d_f} nn_w^{d_f}(\lambda(\delta_n)+\lambda(-\delta_n))\right)\notag\\
&\leq \exp\left(NC_3C_V'C_2^{d_f} n n_w^{d_f} \delta_n^2\right)\notag\\
&\leq e^{C_3N},\label{eq:QZ}
\end{align}
where we have used $\lambda (\beta)+\lambda(-\beta)\leq C_3\beta^2 $ for small $\beta>0$ with some constant $C_3>0$. 

Markov property of $S$ implies that \begin{align}
&Q_Z\left[P_0\left[e_{nN}: S_{in}\in B(z_i,n_w), i=1,\cdots,N \right]^\frac{1}{2}\right]\notag\\
&\leq P_0\left[Q_Z\left[e_{nN}\right]
: S_{in}\in B(z_i,5n_w), i=1,\cdots,N \right]^\frac{1}{2}\notag\\
&\leq  \prod_{i=0}^{N-1}\max_{x\in B(z_i,5n_w)}P_x\left[Q\left[	\exp\left(-\sum_{l=1}^n\sum_{x\in B(z_{i},C_4n_w)}\delta_n \omega(l,x)-\lambda (-\delta_n)\right)		e_{n}	\right]	:	S_n\in B(z_{i+1},5n_w)		\right]^\frac{1}{2}\notag\\
&=\prod_{i=0}^{N-1}\max_{x\in B(z_i,5n_w)}P_x\left[\exp\left((\lambda(\beta-\delta_n)-\lambda(\beta)-\lambda(-\beta))\sharp\{1\leq l\leq n: S_l\in B(z_{i},C_2n_w)\}\right):S_n\in B(z_{i+1},5n_w)\right]^\frac{1}{2}\notag\\
&\leq \prod_{i=0}^{N-1}\max_{x\in B(z_i,5n_w)}P_x\left[\exp\left(-C_4\beta\delta_n\sharp\{1\leq l\leq n: S_l\in B(z_{i},C_2n_w)\}\right):S_n\in B(z_{i+1},5n_w)\right]^\frac{1}{2},\label{eq:ChangeMeasure}
\end{align}
where we have used $\lambda(\beta-\delta_n)-\lambda(\beta)-\lambda(-\beta)\leq -C\beta \delta_n$ for small $\beta>0$ with some constant $C_4>0$.

Putting together \eqref{eq:sumt}, \eqref{eq:QZbdd}, \eqref{eq:QZ}, and \eqref{eq:ChangeMeasure}, we get \begin{align*}
&Q\left[ W_{nN}^0(\beta)^{\frac{1}{2}}\right]\\
&\leq e^{\frac{C_3}{2}N}\left(\sup_{y\in I^{(n)}}\sum_{z\in I^{(n)}}\max_{x\in B(y,5n_w)}P_x\left[\exp\left(-C_4\beta\delta_n\sharp\{1\leq l\leq n: S_l\in B(y,C_2n_w)\}\right):S_n\in B(z,5n_w)\right]^{\frac{1}{2}}\right)^N.
\end{align*}

Thus, \eqref{eq:conc} follows when we show that for any $\ve>0$, there exists $C_1>0$, $C_2>0$ such that \begin{align*}
\sup_{y\in I^{(n)}}\sum_{z\in I^{(n)}}\max_{x\in B(y,5n_w)}P_x\left[\exp\left(-C_4\beta\delta_n\sharp\{1\leq l\leq n: S_l\in B(y,C_2n_w)\}\right):S_n\in B(z,5n_w)\right]^{\frac{1}{2}}<\ve.
\end{align*}

For $y\in I^{(n)}$ and $R\in \mathbb{N}$, we consider \begin{align*}
J_{y,R,1}^{(n)}&=\{z\in I^{(n)}: d( B(y,5n_w), B(z,5n_w))\geq Rn_w\},\\
J_{y,R,2}^{(n)}&=\{z\in I^{(n)}: d( B(y,5n_w), B(z,5n_w))< Rn_w\}.
\end{align*}
Then, we obtain from \eqref{UHK} that for $x\in B(y,5n_w)$\begin{align*}
P_x(S_n\in B(z,5n_w))&\leq \sum_{u\in B(z,5n_w)}\frac{c_1}{n^{\frac{d_f}{d_w}}}\exp\left(-c_2\left(\frac{d(x,u)^{d_w}}{n}\right)^{\frac{1}{d_w-1}}\right)\\
&\leq \tilde{V}(z,5n_w)\frac{c_1}{n^{\frac{d_f}{d_w}}}\exp\left(-c_2\left(\frac{d(B(y,5n_w),B(z,5n_w))^{d_w}}{n}\right)^{\frac{1}{d_w-1}}\right)\\
&\leq c_1C_V'5^{d_f}\exp\left(-c_2\left(\frac{d(B(y,5n_w),B(z,5n_w))^{d_w}}{n}\right)^{\frac{1}{d_w-1}}\right).
\end{align*}
Thus, we obtain that \begin{align*}
\sum_{z\in J_{y,R,1}^{(n)}}\max_{x\in B(y,5n_w)}P_x(S_n\in B(z,5n_w))^{\frac{1}{2}}&\leq \sum_{k=R}^\infty \sum_{\begin{smallmatrix}z\in I^{(n)}, \\ (k-1)n_w<d(B(y,5n_w),B(z,5n_w))\leq  kn_w\end{smallmatrix}}\max_{x\in B(y,5n_w)}P_x(S_n\in B(z,5n_w))^{\frac{1}{2}}\\
&\leq \sum_{k=R}^\infty C_C(k+10)^{d_f}\left(c_1C_V'5^{d_f}\exp\left(-c_2k^{\frac{1}{d_w-1}}\right)\right)^{\frac{1}{2}}\\
&\leq \frac{\ve}{2}
\end{align*}
by taking $R>0$ large enough since the summation in the second inequality converges.

For each $z\in J_{y,R,2}^{(n)}$, we have that 
 \begin{align*}
&P_x\left[\exp\left(-C_4\beta\delta_n\sharp\{1\leq l\leq n: S_l\in B(y,C_2n_w)\}\right):S_n\in B(z,5n_w)\right]\\
&\leq P_x\left[\exp\left(-C_4\beta\delta_n\sharp\{1\leq l\leq n: S_l\in B(y,C_2n_w)\}\right)\right] \\
&\leq P_x\left(\tau(x,C_2n_w)< n\right)+e^{-C_4\beta \delta_n n}\\
&\leq 2c_5\exp\left(-c_6\left(\frac{C_2}{2}\right)^{\frac{d_w}{d_w-1}}\right)+\exp\left(-C_4\left(\frac{C_1}{n}\right)^{\frac{d_w-d_f}{2d_w}}\left(C_V'n(C_2n_w)^{d_f}\right)^{-\frac{1}{2}}n\right)\\
&\leq 2c_5\exp\left(-c_6\left(\frac{C_2}{2}\right)^{\frac{d_w}{d_w-1}}\right)+\exp\left(		-C_4\frac{C_1^{\frac{d_w-d_f}{2d_w}}}{\sqrt{C_V'C_2^{d_f}}}\right),
\end{align*}
and hence we have that for each $y\in V$ and fixed $R>0$, 
\begin{align}
&\sum_{z\in J_{y,R,2}^{(n)}}\max_{x\in B(y,5n_w)}P_x(S_n\in B(z,5n_w))^{\frac{1}{2}}\notag\\
&\leq \sum_{z\in J_{y,R,2}^{(n)}}\left(2c_5\exp\left(-c_6\left(\frac{C_2}{2}\right)^{\frac{d_w}{d_w-1}}\right)+\exp\left(		-C_4\frac{C_1^{\frac{d_w-d_f}{2d_w}}}{\sqrt{C_V'C_2^{d_f}}}\right)\right)^{\frac{1}{2}}\notag\\
&\leq C_C(10R)^{d_f}\left(2c_5\exp\left(-c_6\left(\frac{C_2}{2}\right)^{\frac{d_w}{d_w-1}}\right)+\exp\left(		-C_4\frac{C_1^{\frac{d_w-d_f}{2d_w}}}{\sqrt{C_V'C_2^{d_f}}}\right)\right)^{\frac{1}{2}}\notag\\
&<\frac{\ve}{2}.\label{J2}
\end{align}
by taking $C_2>0$ and then $C_1>0$ large enough.


\section{Lower bound}
This section is devoted to the proof of Theorem \ref{main1} (2). Throughout this section,
we assume \eqref{UHK} and \eqref{LHK}, which imply \eqref{BW}, \eqref{BG}, \eqref{VG}, and \eqref{eq:polgro}.

Our proof of the lower bound is  a modification of the one in \cite{AleYil}. Since $G=(V,E)$ is infinite graph, there exists an inifinite path $\{x_n\}_{n=0}^\infty$ such that $x_0=0$, $\langle x_i,x_{i+1}\rangle\in  E$, and $d(0,x_n)=0$ for $n\geq 1$.

For small $\beta>0$, let $n$ be the smallest integer bigger than $h_V(\beta)=C_1\beta^{-\frac{4}{2-d_s}}$, where $C_1>0$ will be taken small later. 

We also define $n_w=n^{\frac{1}{d_w}}$ for $n\geq 1$.

We consider two sets of balls
\begin{align*}
\mathbb{B}_n&=\{B_i^{(n)}:=B(x_{in_w},{n_w}): i\geq 0\}\\
\tilde{\mathbb{B}}_n&=\{\tilde{B}_i^{(n)}:={B}(x_{in_w},{C_7n_w}): i\geq 0\},
\end{align*}
where $C_7\geq 5$ will be chosen large enough later.

Then, it is clear that \begin{alignat*}{2}
&B_i^{(n)}\cap B_{j}^{(n)}\not=\emptyset &\quad (i\not=j)\\
&\tilde{B}_{i}^{(n)}\cap \tilde{B}_{j}^{(n)}=\emptyset\quad &(|i-j|\geq 2C_7+2)\\
&B_{j}^{(n)}\subset  \tilde{B}_i^{(n)} &\quad (|i-j|\leq 1).
\end{alignat*}

For each $I, J\in\mathbb{N}$ with $0\leq J\leq I$, we set a rectangle in $\mathbb{N}_0\times V$\begin{align*}
R^{(n)}(I,J)=[In,(I+1)n]\times \tilde{B}_J^{(n)}.
\end{align*}

We introduce a coarse grained time-space lattice embedded into $\mathbb{N}_0\times \mathbb{Z}$:\begin{align*}
\mathbb{L}_{CG}&=\{(I,J)\in \mathbb{N}_0\times \mathbb{N}_0:0\leq J\leq I, I-J\in 2\mathbb{N}_0 \}.
\end{align*}

A path $\Gamma=\Gamma_{(I,J)}$ from $(0,0)$ to $(I,J)$ in $\mathbb{L}_{CG}$ is a sequence of sites  $\{(i,\gamma_i):i=0,\cdots,I\}$ with $\gamma_0=0$, $|\gamma_{i+1}-\gamma_i|=1$ ($i=0,\cdots,I-1$), and $\gamma_I=J$.  Also, an infinite path $\Gamma=\Gamma_{\infty}$ is an infinite sequence of sites  $\{(i,\gamma_i):i\geq 0\}$ with $\gamma_0=0$, $|\gamma_{i+1}-\gamma_i|=1$ ($i\geq 0$). 
A length of a path $\Gamma_{(I,J)}$ denoted by $|\Gamma|$ is $I$  and we define $|\Gamma|=\infty$ for an infinite path $\Gamma$.
We denote by  $\Gamma(i)=\gamma_i$ the spatial site of a path $\Gamma$ at time $i$.  
For $\Gamma_{(I,J)}$ and $\Gamma_{(I,J)}'$, we say that $\Gamma_{(I,J)}$ is closer to $0$ than $\Gamma_{(I,J)}'$ if $\Gamma(i)\leq \Gamma'(i)$ for $0\leq i\leq I$. Given a finite path  $\Gamma=\Gamma_{(I,J)}$, we denote by $\Gamma_+=\Gamma_{(I,J),+}$ and $\Gamma_-=\Gamma_{(I,J),-}$ the path to $(I+1,J+1)$ and $(I+1,J-1)$ whose path up to time $I$ coincides with $\Gamma$, respectively.


For an $I\geq 0$ and a path  $\Gamma$ with  $|\Gamma|\geq I$, we consider a set $\Omega_I^{(n)}(\Gamma)$ of trajectories  of random walk $S$ up to time $In$ by\begin{align*}
\Omega_I^{(n)}(\Gamma)=\left\{s=\{(i,s_i)_{i=0}^{In}\}: s_0=z,s_{Ln}\in B_{\gamma_L}^{(n)} \text{for } L\leq I, s\subset \bigcup_{L<I}R^{(n)}(L,\gamma_L) \right\},
\end{align*}
where we remark that random walk doesn't have to start at $0$ in the definition.

We obtain a lower bound of the free energy by the following lemma:
\begin{lem}\label{lem:path}
Taking $C_1>0$ small enough in $h_V(\beta)$ and $C_7>0$ large enough, then there exists a $\tilde{c}\in (0,1)$ and $p>0$ such that $\beta\in (0,\beta_0]$ \begin{align*}
Q\left(\text{There exists a random infinite path $\Gamma$ such that }W_{In}\left(\Omega_I^{(n)}(\Gamma)\right)\geq \tilde{c}^I \text{ for all $I\geq 0$}\right)>p,
\end{align*}
where we define \begin{align*}
W_n^x(A)=P_S^x\left[e_n: A\right]\quad  \text{for }A\in \mathcal{G}_S, x\in V.
\end{align*}
\end{lem}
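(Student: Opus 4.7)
The plan is to prove the lemma by a coarse-graining / finite-range-dependent oriented percolation comparison on $\mathbb{L}_{CG}$, in the spirit of Alexander--Yildirim \cite{AleYil}. For each $(I,J)\in\mathbb{L}_{CG}$ I would introduce the quenched one-step transition weights
\begin{equation*}
A_{I,J}^{\pm}(y):=P_S^y\Bigl[\exp\Bigl(\beta\sum_{k=1}^{n}\omega(In+k,S_k)-n\lambda(\beta)\Bigr);\;S_n\in B_{J\pm 1}^{(n)},\;S_{[0,n]}\subset\tilde{B}_J^{(n)}\Bigr],
\end{equation*}
and declare the site $(I,J)$ \emph{good} (event $G_{I,J}$) when $\min\{A_{I,J}^{+}(y),A_{I,J}^{-}(y)\}\geq c_0$ for every $y\in B_J^{(n)}$, with $c_0>0$ to be chosen. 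By the Markov property of $S$ applied at the times $n,2n,\dots$, along any coarse-grained path $\Gamma=\{(i,\gamma_i)\}_{i\geq 0}$ issuing from $(0,0)$ all of whose sites $(i,\gamma_i)$ with $0\leq i<I$ are good, telescoping yields
\begin{equation*}
W_{In}^{0}\bigl(\Omega_I^{(n)}(\Gamma)\bigr)\geq c_0^{I}\quad\text{for every }I\geq 0,
\end{equation*}
so the lemma reduces to exhibiting, with positive $Q$-probability, an infinite open oriented path through good sites in $\mathbb{L}_{CG}$ starting at $(0,0)$.

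The quantitative input needed is $Q(G_{I,J})\geq 1-\eta$ with $\eta$ tunable by $C_1$ and $C_7$. For fixed $y\in B_J^{(n)}$, \eqref{LHK} and \eqref{VG} yield $P_S^y(S_n\in B_{J\pm 1}^{(n)})\geq c$, and \eqref{exit} with $C_7$ large makes the extra confinement to $\tilde{B}_J^{(n)}$ cost only a bounded factor, so $Q[A_{I,J}^{\pm}(y)]\geq 2c_0$ for some $c_0>0$ independent of $\beta$. For the second moment, the i.i.d.\ structure and moment generating function of $\omega$ give, with $S,S'$ two independent copies of the walk starting at $y$,
\begin{equation*}
Q\bigl[A_{I,J}^{\pm}(y)^{2}\bigr]\leq P_S^{y}\otimes P_S^{y}\Bigl[\exp\Bigl((\lambda(2\beta)-2\lambda(\beta))\sum_{k=1}^{n}\mathbf{1}_{\{S_k=S_k'\}}\Bigr)\Bigr],
\end{equation*}
and $\lambda(2\beta)-2\lambda(\beta)=O(\beta^{2})$ combined with the two-walk collision bound $\sum_{k=1}^{n}P(S_k=S_k')\lesssim n^{1-d_s/2}$ (from \eqref{UHK} and reversibility) gives $\beta^{2}\sum_k P(S_k=S_k')\lesssim C_1^{1-d_s/2}$. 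Choosing $C_1$ small makes this as small as desired, so a standard replica expansion bounds the right-hand side by $(1+\eta)\,Q[A_{I,J}^{\pm}(y)]^{2}/Q[A_{I,J}^{\pm}(y)]^{2}$-worth of the mean squared. Paley--Zygmund then yields $Q(A_{I,J}^{\pm}(y)\geq c_0)\geq 1-O(\eta)$, and a union bound over the $O(n_w^{d_f})$ allowed $y$'s, absorbed into a further tightening of $C_1$, produces $Q(G_{I,J})\geq 1-\eta$.

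Finally, the time-slabs $\{In+1,\dots,(I+1)n\}\times V$ are disjoint across distinct $I$ and $\tilde{B}_J^{(n)}\cap\tilde{B}_{J'}^{(n)}=\emptyset$ whenever $|J-J'|\geq 2C_7+2$, so the Bernoulli field $\{\mathbf{1}_{G_{I,J}}\}$ is finitely-range dependent on $\mathbb{L}_{CG}$. After grouping a fixed number of neighbouring columns into super-sites, the Liggett--Schonmann--Stacey domination theorem bounds it from below by an i.i.d.\ Bernoulli oriented percolation on $\mathbb{Z}^{2}$ of parameter arbitrarily close to $1$, which is supercritical; hence the origin lies in an infinite open cluster with some positive probability $p$, and any infinite open oriented path issuing from $(0,0)$ inside that cluster provides the desired $\Gamma$ with $\tilde{c}=c_0$. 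The main obstacle is the second-moment step, where one must verify that the endpoint constraint $S_n\in B_{J\pm 1}^{(n)}$ and the confinement $S_{[0,n]}\subset\tilde{B}_J^{(n)}$ distort the two-walk collision sum only by bounded multiplicative factors; this rests on the delicate interplay of \eqref{UHK}, \eqref{LHK} and \eqref{exit} once $C_7$ is large and $C_1$ is small, precisely at the diffusive scale $n\asymp\beta^{-4/(2-d_s)}$.
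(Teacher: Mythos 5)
Your overall architecture (coarse-grained sites on $\mathbb{L}_{CG}$, one time-slab per row, goodness of a site implying a multiplicative factor $c_0$, finite-range dependence and Liggett--Schonmann--Stacey, contour argument for supercritical oriented percolation) is the right circle of ideas, and the telescoping inequality
$W_{In}^0(\Omega_I^{(n)}(\Gamma))\geq c_0^I$
along a path of good sites is correct as you set it up. But the definition of ``good'' you choose --- requiring $A^{\pm}_{I,J}(y)\geq c_0$ \emph{for every} $y\in B_J^{(n)}$ --- creates a gap that cannot be closed by tightening $C_1$.

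Here is why. Chebyshev (or Paley--Zygmund) together with the two-walk collision estimate gives, for a \emph{single} starting point $y$,
\begin{align*}
Q\bigl(A^{\pm}_{I,J}(y)<c_0\bigr)\;\lesssim\;\bigl(\lambda(2\beta)-2\lambda(\beta)\bigr)\,n^{\,1-d_f/d_w}\;\asymp\;\beta^2 n^{\,1-d_s/2}\;\asymp\;C_1^{\,1-d_s/2},
\end{align*}
a quantity that is small when $C_1$ is small but does \emph{not} decay in $n$. Your union bound multiplies this by $\#B_J^{(n)}\asymp n_w^{d_f}=n^{d_s/2}$, producing
\begin{align*}
n^{d_s/2}\cdot\beta^2 n^{\,1-d_s/2}\;=\;\beta^2 n\;\asymp\;C_1\,\beta^{-2d_s/(2-d_s)}\;\longrightarrow\;\infty\quad\text{as }\beta\to 0,
\end{align*}
for every fixed $C_1>0$ and every $0<d_s<2$. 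So the per-site failure probability of your good event $G_{I,J}$ is not small for small $\beta$; it diverges. This is the step flagged in your last paragraph as ``absorbed into a further tightening of $C_1$,'' and it genuinely breaks.

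The paper avoids the union bound entirely by replacing the uniform-in-$y$ requirement with a bound on the \emph{weighted average} $\sum_{x\in B_J^{(n)}}\nu^{(n)}_{(I,J)}(x)\,\theta_{In}\circ W_n^x(\cdot)$, where $\nu^{(n)}_{(I,J)}$ is the (random, $\mathcal{F}_{In}$-measurable) endpoint distribution of the polymer restricted to the current optimal coarse path $\Gamma^{\mathrm{opt}}_{(I,J)}$. Conditionally on $\mathcal{F}_{In}$ this is a single random variable whose relative variance is again $O(C_1^{1-d_s/2})$ by the collision-local-time bound, so one Chebyshev suffices --- no union bound over $y$. The price is that the openness of a site $(I,J)$ now depends on the whole history up to time $In$ through $\Gamma^{\mathrm{opt}}_{(I,J)}$, so the resulting field $\{X^{(n)}_{(I,J)}\}$ is not finitely-range dependent in your simple sense; the paper therefore uses an ordered-conditional version of the LSS domination argument (their Lemma \ref{lem:stodomx} and Lemma \ref{lem:IJdom}), ordering $\mathbb{L}_{CG}$ lexicographically in $I$ and then $J$ and showing $Q(X^{(n)}_{(I,J)}=1\mid\mathcal{F}_{In})>1-\ve$. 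If you keep your cleaner, history-free good events you must find a replacement for the uniform-in-$y$ step; as written the lemma is not proved.
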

Indeed, it is trivial that for any infinite path $\Gamma$ \begin{align*}
W_{In}\geq W_{In}\left(\Omega_I^{(n)}(\Gamma)\right)
\end{align*}
and hence Lemma \ref{lem:path} implies that \begin{align*}
Q\left(\varliminf_{I\to \infty}\frac{1}{In}\log W_{In}\geq \frac{\tilde{c}}{n}\right)>p.
\end{align*}
and Proposition \ref{propFE} tells us that $F_q(\beta)\geq \dis\frac{\tilde{c}}{n}$ $Q$-a.s.

\subsection{Proof of Lemma \ref{lem:path}}

Suppose  $1$-$0$ is assigned  to each site $(I,J)\in \mathbb{L}_{CG}$ in a certain  manner. Then, we say that $(I,J)$ is \textit{open} (\textit{closed}) if $1$ $(0)$ is assigned to $(I,J)$.  
 
We say a path $\Gamma_{(I,J)}$ is \begin{itemize}
\item \textit{open} if all the site $(i,\gamma_i)$ is open,
\item \textit{maximal} if it has the maximum number of open sites in $0\leq i\leq I-1$ among all paths to $(I,J)$,
\item \textit{optimal} if it is the maximal path which is closer to $0$ than any other paths $\Gamma_{(I,J)}'$.
\end{itemize}

For each $(I,J)\in \mathbb{L}_{CG}$, we denote by  $\Gamma_{(I,J)}^{\text{opt}}$ the  optimal path to $(I,J)$, which is uniquely determined by the configuration in the time-space site up to time $I-1$.

Now, we will assign $1$-$0$  to each site by induction in $I$. Let $\tilde{c}>0$ be a constant which will be given explicitly later. 

We assign $1$ to $(0,0)$ if \begin{align*}
W_{n}(\Omega_1^{(n)}(\Gamma_{(0,0),+}))\geq \tilde{c}
\end{align*}
 and $0$ otherwise. Given the $1$-$0$ state  to  sites $(i,j)\in \mathbb{L}_{CG}$ for $i\leq I-1$, then we assign $1$ to the site $(I,J)$ $0<J\leq I$ if \begin{align*}
\widetilde{W}_{(I,J),+}^{(n)}:= \frac{W_{(I+1)n}\left(\Omega_{I+1}^{(n)}({\Gamma_{(I,J),+}^{\text{opt}}})\right)}{W_{In}\left(\Omega_{I}^{(n)}({\Gamma_{(I,J)}^{\text{opt}}})\right)}\geq \tilde{c}\quad \text{ and \quad }
\widetilde{W}_{(I,J),-}^{(n)}:=\frac{W_{(I+1)n}\left(\Omega_{I+1}^{(n)}({\Gamma_{(I+1,J),-}^{\text{opt}}})\right)}{W_{In}\left(\Omega_I^{(n)}({\Gamma_{(I,J)}^{\text{opt}}})\right)}\geq \tilde{c},
 \end{align*}
and to the site $(I,0)$  if \begin{align*}
\widetilde{W}_{(I,0),+}^{(n)}:= \frac{W_{(I+1)n}\left(\Omega_{I+1}^{(n)}({\Gamma_{(I,0),+}^{\text{opt}}})\right)}{W_{In}\left(\Omega_I^{(n)}({\Gamma_{(I,0)}^{\text{opt}}})\right)}\geq \tilde{c}\quad,
 \end{align*}
otherwise $0$.

 The construction implies that  if the optimal path to the site $(I,J)$, $\Gamma_{(I,J)}^{{\textrm{opt}}}$, is open, then \begin{align*}
 W_{(I+1)n}\left(\Omega_I^{(n)}\left({\Gamma_{(I,J),*}^{\text{opt}}}\right)\right)\geq \tilde{c}^{I+1},\quad *\in \{+,-\}.
 \end{align*}

Also, if there exists an infinite open path $\Gamma$, then  \begin{align*}
 W_{In}\left(\Omega_I^{(n)}\left({\Gamma}\right)\right)\geq \tilde{c}^{I} \quad \text{for any $I\geq 0$}.
\end{align*}

Thus, it is enough to show that there exists $p_1>0$ such that \begin{align*}
Q\left(\text{There exists an infinite open path $\Gamma$}\right)>p_1,
\end{align*} 
where $\Gamma|_I$ is the path of $\Gamma$ up to time $I$.

We introduce  random probability measures on $B_{J}^{(n)}$ by \begin{align*}
\nu^{(n)}_{(I,J)}(x)=\frac{1}{W_{In}(\Omega_I^{(n)}(\Gamma_{(I,J)}^{\text{opt}}))}W_{In}\left((\Omega_I^{(n)}(\Gamma_{(I,J)}^{\text{opt}}))\cap \{S_{In}=x\}\right),\quad x\in B_{J}^{(n)}.
\end{align*}
Then, we find that  \begin{align*}
\widetilde{W}_{(I,J),*}^{(n)}=\sum_{x\in B_J^{(n)}}\nu_{(I,J)}^{(n)}(x) \theta_{In}\circ W^x_n\left(\{S_i\in \tilde{B}_J^{(n)},i=0,1\cdots,n\}\cap \{S_n\in B_{J*1}^{(n)}\}\right).
\end{align*}

It is easy to see that \begin{align*}
Q\left[\left.\widetilde{W}_{(I,J),*}^{(n)}\right|\mathcal{F}_{In}\right]=\sum_{x\in B_{J}^{(n)}}\nu_{(I,J)}^{(n)}(x)P_S^x\left(\{S_i\in\tilde{B}_{J}^{(n)}, i=1,\cdots,n\}  \cap \{S_n\in B_{J*1}^{(n)}\}\right)
\end{align*}
for $*\in \{+,-\}$. Let $\widetilde{\Omega}_{J*}^{(n)}(S)=\{S_i\in\tilde{B}_{J}^{(n)}, i=1,\cdots,n\}  \cap \{S_n\in B_{J*1}^{(n)}\}$.

\begin{prop}\label{prop:infprob}
There exists $c>0$ such that \begin{align*}
\inf_{n\geq 1}\inf_{J\in \mathbb{N}_0}\inf_{x\in \tilde{B_J^{(n)}}}P_S^x\left(\{S_i\in\tilde{B}_{J*}^{(n)}, i=1,\cdots,n\}  \cap \{S_n\in B_{J*1}^{(n)}\}\right)>c.
\end{align*}

\end{prop}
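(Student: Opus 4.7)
The plan is to lower-bound $P_S^x(\widetilde\Omega_{J*}^{(n)})$ via the inclusion--exclusion
\[
P_S^x\bigl(\widetilde\Omega_{J*}^{(n)}\bigr)\;\geq\;P_S^x\bigl(S_n\in B_{J*1}^{(n)}\bigr)\;-\;P_S^x\bigl(\tau_{\tilde B_J^{(n)}}\leq n\bigr),
\]
estimating the first (``landing'') term from below via the lower heat kernel \eqref{LHK} and the second (``exit'') term from above via \eqref{exit}. Since the conditional-expectation identity preceding the proposition has $\nu_{(I,J)}^{(n)}$ supported on $B_J^{(n)}$, it suffices to take the infimum over $x\in B_J^{(n)}$. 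For the exit probability, $d(x,x_{Jn_w})\leq n_w$ yields $B(x,(C_7-1)n_w)\subset\tilde B_J^{(n)}$, so Lemma \ref{lem:exit} with $2t=C_7-1$ gives
\[
P_S^x\bigl(\tau_{\tilde B_J^{(n)}}\leq n\bigr)\;\leq\;2c_5\exp\!\Bigl(-c_6\bigl((C_7-1)/2\bigr)^{d_w/(d_w-1)}\Bigr),
\]
which can be made arbitrarily small by choosing $C_7$ large.

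For the landing probability, the triangle inequality along $\{x_k\}$ gives $d(x,y)\leq 3n_w$ for every $x\in B_J^{(n)}$ and $y\in B_{J*1}^{(n)}$, hence $d(x,y)^{d_w}/n\leq 3^{d_w}$. Then \eqref{LHK} yields
\[
p_n(x,y)+p_{n+1}(x,y)\;\geq\;c_3\,n^{-d_f/d_w}\exp\!\bigl(-c_4\,3^{d_w/(d_w-1)}\bigr).
\]
Summing over $y\in B_{J*1}^{(n)}$ and using $\sharp B_{J*1}^{(n)}\geq (C_V')^{-1}n^{d_f/d_w}$ from \eqref{eq:polgro}, the factors of $n^{d_f/d_w}$ cancel and I obtain a bound
\[
P_S^x\bigl(S_n\in B_{J*1}^{(n)}\bigr)+P_S^x\bigl(S_{n+1}\in B_{J*1}^{(n)}\bigr)\;\geq\;\kappa_0
\]
for some $\kappa_0>0$ independent of $n,J,x$.

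The principal obstacle is the parity issue intrinsic to \eqref{LHK}: on bipartite graphs one of $p_n(x,y),p_{n+1}(x,y)$ must vanish, so the estimate pins down only the two-time sum rather than $p_n$ alone. To extract the time-$n$ contribution I would case-split. If $P_S^x(S_n\in B_{J*1}^{(n)})\geq\kappa_0/2$ we are finished; otherwise $P_S^x(S_{n+1}\in B_{J*1}^{(n)})\geq\kappa_0/2$, and the inclusion $\{S_{n+1}\in B_{J*1}^{(n)}\}\subset\{S_n\in B(x_{(J\pm 1)n_w},n_w+1)\}$ transfers the bound to the one-step enlargement. Using \eqref{BW} and \eqref{BG} to keep transition-probability and volume ratios uniformly bounded (equivalently, a parabolic-Harnack comparison, which follows from \eqref{UHK}+\eqref{LHK}), and restricting in the LHK summation to the sub-collection of $y\in B_{J*1}^{(n)}$ of correct parity relative to $(x,n)$ (which still has volume of order $n^{d_f/d_w}$ under the assumed heat-kernel regularity), one recovers $P_S^x(S_n\in B_{J*1}^{(n)})\geq\kappa_2>0$. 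Choosing $C_7$ so that the exit bound drops below $\kappa_2/2$ concludes the proof with $c=\kappa_2/2$.
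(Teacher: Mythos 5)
Your two-term decomposition $P^x_S(\widetilde\Omega_{J*}^{(n)})\ge P^x_S(S_n\in B_{J*1}^{(n)})-P^x_S(\tau<n)$, your exit bound via Lemma \ref{lem:exit} with $2t=C_7-1$, and your observation that it suffices to take $x\in B_J^{(n)}$ all match the paper exactly, and using \eqref{LHK} together with \eqref{eq:polgro} to get a uniform positive lower bound on $\sum_y\bigl(p_n(x,y)+p_{n+1}(x,y)\bigr)$ is the correct first step. The gap is entirely in passing from the two-time sum $p_n+p_{n+1}$ to $p_n$ alone. In your Case~2 the inclusion $\{S_{n+1}\in B_{J*1}^{(n)}\}\subset\{S_n\in B(x_{(J*1)n_w},n_w+1)\}$ lands you in the ball of radius $n_w+1$, not $n_w$, and you do not justify comparing the hitting probabilities of those two balls. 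Your fallback --- restricting the LHK sum to the sub-collection of $y\in B_{J*1}^{(n)}$ of the correct parity and asserting it ``still has volume of order $n^{d_f/d_w}$'' --- is not proved (it is not an immediate consequence of \eqref{eq:polgro}; one would need a separate argument using \eqref{UHK} and \eqref{outside} centered near $x_{(J*1)n_w}$), it only makes sense if $G$ is bipartite, and the invocation of a parabolic-Harnack comparison is left at the level of a slogan. As written, these steps do not close the argument.

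The paper avoids the parity bookkeeping entirely by shrinking the target before applying \eqref{LHK}: it sums $p_n(x,y)+p_{n+1}(x,y)$ over the half-ball $B(x_{(J*1)n_w},\tfrac12 n_w)$ rather than over all of $B_{J*1}^{(n)}$. The distance estimate $d(x,y)\le\tfrac52 n_w$ and \eqref{eq:polgro} still yield a uniform positive lower bound for this sum. Then, writing $P^x_S(S_{n+1}=y)=\sum_z P^x_S(S_n=z)P^z_S(S_1=y)$ and noting that $y\in B(x_{(J*1)n_w},\tfrac12 n_w)$ forces $z\in B(x_{(J*1)n_w},\tfrac12 n_w+1)\subset B_{J*1}^{(n)}$ for $n_w\ge 2$, one obtains
\begin{align*}
\sum_{y\in B(x_{(J*1)n_w},\frac12 n_w)}\bigl(p_n(x,y)+p_{n+1}(x,y)\bigr)\le C_8\,P^x_S\bigl(S_n\in B_{J*1}^{(n)}\bigr),
\end{align*}
so the two-time lower bound transfers directly to $P^x_S(S_n\in B_{J*1}^{(n)})\ge C_9>0$. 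This ``shrink the ball so that its one-step enlargement still fits'' device is the idea you are missing; once you insert it, the rest of your argument goes through unchanged and coincides with the paper's.
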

\begin{proof}
For fixed $n\geq 1$, $J\geq 1$, $x\in B_J^{(n)}$, \begin{align*}
P_S^x\left(\{S_i\in\tilde{B}_{J*}^{(n)}, i=1,\cdots,n\}  \cap \{S_n\in B_{J*1}^{(n)}\}\right)\geq P_S^x\left(S_n\in B_{J*1}^{(n)}\right)-P_S^x\left(S_i\not\in\tilde{B}_{J*}^{(n)}, i=1,\cdots,n\}\right).
\end{align*} 
It is easy to see that for any $J\geq 1$, $x\in B_J^{(n)}$\begin{align*}
\sum_{y\in {B}(x_{(J*1)n_w},\frac{1}{2}n_w)}\left(P_S^x\left(S_n=y\right)+P_S^x\left(S_{n+1}=y\right)\right)&\leq \sum_{y\in {B}(x_{(J*1)n_w},\frac{1}{2}n_w) }P_S^x\left(S_n=y\right)\\
&\hspace{2em}+\sum_{z\in V}\sum_{z\in {B}(x_{(J*1)n_w},\frac{1}{2}n_w+1) }P_S^x\left(S_n=z\right)P_S^z(S_1=y)\\
&\leq C_8P_S^x\left( S_n\in {B}_{J*1}^{(n)}\right),
\end{align*}
where $C_8>0$ is a constant depending only on $\mu$.
Thus, \eqref{LHK} implies that there exists a constant $C_9>0$ such that for any $J\geq 1$, $x\in B_J^{(n)}$\begin{align*}
P_S^x\left( S_n\in {B}_{J*1}^{(n)}\right)\geq C_9.
\end{align*}

From \eqref{exit},  we can take 
\begin{align*}
P_S^x\left(S_i\not\in\tilde{B}_{J*}^{(n)}, i=1,\cdots,n\}\right)\leq 2c_5\exp\left(-c_6\left(\frac{C_7-1}{2}\right)^\frac{d_w}{d_w-1}\right)
\end{align*}
small arbitrarily by letting $C_7>0$ large enough.

\end{proof}

Throughout this section, we will fix a constant $\tilde{c}=\frac{1}{2}c$, where $c>0$ is a constant appeared in Proposition \ref{prop:infprob}.

Let  $X^{(n)}_{(I,J)}=1\{(I,J)\text{ is open}\}$ for $(I,J)\in\mathbb{L}_{CG}$.  In the proof of Lemma \ref{lem:path}, we will show that the $\{X_{(I,J)}: (I,J)\in \mathbb{L}_{CG}\} $ stochastically dominates a super-critical oriented percolation.

\begin{lem}\label{lem:Berdens}
For any $\ve>0$, there exists $C_1>0$ such that for any $(I,J)\in \mathbb{L}_{CG}$ and for any small $\beta >0$ \begin{align*}
Q\left(X^{(n)}_{(I,J)}=1|\mathcal{F}_{In}\right)>1-\ve.
\end{align*}
\end{lem}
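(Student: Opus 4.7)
The plan is to prove the lemma via a conditional second-moment bound on $\widetilde{W}^{(n)}_{(I,J),\pm}$. Both random variables depend only on the environment in the time window $(In, (I+1)n]$ and are therefore independent of $\mathcal{F}_{In}$, while the random initial distribution $\nu^{(n)}_{(I,J)}$ is $\mathcal{F}_{In}$-measurable because the optimal path $\Gamma_{(I,J)}^{\text{opt}}$ is determined by the open/closed configuration of sites at previous times, which in turn is measurable with respect to $\omega$ through time $In$. Thus, conditional on $\mathcal{F}_{In}$, the random variable $Y := \widetilde{W}^{(n)}_{(I,J),*}$ acts as a fresh functional of i.i.d.\ environment with a deterministic initial distribution $\nu := \nu^{(n)}_{(I,J)}$. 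By Proposition \ref{prop:infprob} its conditional mean satisfies $Q[Y \mid \mathcal{F}_{In}] = \sum_x \nu(x) P^x_S(\widetilde{\Omega}^{(n)}_{J*}) \geq c = 2\tilde{c}$. By Chebyshev's inequality and a union bound over $* \in \{+,-\}$, the lemma follows once we establish that $\operatorname{Var}(Y \mid \mathcal{F}_{In}) \leq \eta(C_1)$ with $\eta(C_1) \to 0$ as $C_1 \to 0$, uniformly in $(I,J)$ and $\nu$.

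To compute the conditional second moment, a standard calculation using the i.i.d.\ structure of $\omega$ gives, for any event $A$ depending on a length-$n$ path,
$$Q\bigl[W^x_n(A)\, W^y_n(A)\bigr] = P^x_S \otimes P^y_S\bigl[e^{\phi L_n};\, S^{(1)} \in A,\, S^{(2)} \in A\bigr],$$
where $S^{(1)}, S^{(2)}$ are independent copies of $S$, $L_n := \#\{1 \leq k \leq n : S^{(1)}_k = S^{(2)}_k\}$ is the collision number, and $\phi := \lambda(2\beta) - 2\lambda(\beta) = O(\beta^2)$ as $\beta \to 0$. Since $e^{\phi L_n} - 1 \geq 0$, dropping the restriction to $A \times A$ yields
$$\operatorname{Var}(Y \mid \mathcal{F}_{In}) \leq \sup_{x,y \in V} E^{x,y}\bigl[e^{\phi L_n} - 1\bigr].$$

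The core step is then a Khas'minskii-type bound on this exponential moment: setting $\alpha_n := \sup_{x,y \in V} E^{x,y}[L_n]$, the elementary estimate $E^{x,y}[L_n^k] \leq k!\, \alpha_n^k$, which follows from the strong Markov property applied at consecutive collision times, gives $\sup_{x,y} E^{x,y}[e^{\phi L_n}] \leq (1 - \phi \alpha_n)^{-1}$ provided $\phi \alpha_n < 1$. To bound $\alpha_n$, note that
$$\sum_{z \in V} p_k(x,z)\, p_k(y,z) \leq \Bigl(\sup_z p_k(y,z)\Bigr)\sum_z p_k(x,z) \leq \frac{c_1}{k^{d_s/2}}$$
by \eqref{UHK}; summing over $k$ and using $d_s < 2$ yields $\alpha_n \leq C_\alpha\, n^{1-d_s/2}$. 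Substituting $n \leq C_1 \beta^{-4/(2-d_s)} + 1$ and $\phi \leq C_\phi \beta^2$, the $\beta$-dependence in the product $\phi \alpha_n$ cancels exactly and one obtains $\phi \alpha_n \leq C\, C_1^{(2-d_s)/2}$.

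Choosing $C_1$ small enough therefore makes $\operatorname{Var}(Y \mid \mathcal{F}_{In}) \leq \eta(C_1) \to 0$, so Chebyshev gives $Q(Y < \tilde{c} \mid \mathcal{F}_{In}) \leq 4\eta(C_1)/c^2 < \varepsilon/2$, and a union bound over the two signs completes the proof. The main obstacle is the scale calibration: the overlap $L_n$ grows like $n^{1-d_s/2}$ in the strongly recurrent regime ($d_s < 2$), so the exponent $4/(2-d_s)$ is precisely the one that balances the $\beta^2$-scale of $\phi$, and the freedom to tune $C_1$ is what drives the variance to zero.
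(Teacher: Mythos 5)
Your overall strategy matches the paper's: reduce to a conditional Chebyshev bound, compute the conditional second moment in terms of the two-replica collision local time $L_n$, drop the event restriction, and show $\sup_{x,y}E^{x,y}[e^{\phi L_n}-1]$ is small by tuning $C_1$ once $\phi\sim\beta^2$ is balanced against $\alpha_n\sim n^{1-d_s/2}\sim C_1^{(2-d_s)/2}\beta^{-2}$. The measurability reduction, the heat-kernel bound on $\alpha_n$, and the scale calibration are all exactly as in the paper.

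The one concrete flaw is the claimed moment bound $E^{x,y}[L_n^k]\leq k!\,\alpha_n^k$, which is false as stated: already for $n=1$, $L_n$ is Bernoulli$(p)$ with $\alpha_n=p$, so $E[L_n^2]=p$ while $2!\,\alpha_n^2=2p^2<p$ once $p<1/2$. The usual continuous-time Khas'minskii argument does not carry over verbatim because the indicators $1\{S_i=S'_i\}$ are bounded but not infinitesimal, so repeated indices contribute. The clean discrete substitute — and what the paper uses — is the exact algebraic identity
\begin{align*}
e^{\phi L_n}=\prod_{i=1}^n\Bigl(1+(e^{\phi}-1)\mathbf{1}\{S_i=S'_i\}\Bigr)
=\sum_{k=0}^{n}(e^{\phi}-1)^{k}\sum_{1\leq j_1<\cdots<j_k\leq n}\prod_{l=1}^{k}\mathbf{1}\{S_{j_l}=S'_{j_l}\},
\end{align*}
whose strictly increasing indices make the Markov chaining argument go through and yield
$E^{x,y}[e^{\phi L_n}-1]\leq\sum_{k\geq1}\bigl((e^{\phi}-1)\alpha_n\bigr)^k$,
a geometric series in $(e^{\phi}-1)\alpha_n$ rather than $\phi\alpha_n$. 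Since $e^{\phi}-1\sim\phi$ for small $\beta$, this changes nothing in the calibration, and your conclusion that the conditional variance is $O(C_1^{(2-d_s)/2})$ stands; you should just replace the Khas'minskii moment inequality with this identity (or state the Khas'minskii lemma in the $(e^\phi-1)$ form).
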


The following lemma which is a modification of \cite[Theorem 1.3]{LigSchSta} tells us that $1$-$0$-states $\{X^{(n)}_{(I,J)}:(I,J)\in \mathbb{L}_{CG}\}$ stochastically dominate non-trivial Bernoulli random variables $\{Y_{(I,J)}:(I,J)\in \mathbb{L}_{CG}\}$. 

\begin{lem}\label{lem:stodom}
Suppose that $Q\left(X^{(n)}_{(I,J)}=1\right)>p$. There exists a $p_0\in (0,1)$ such that if $p>p_0$, then there exist i.i.d.\,Bernoulli random variables $\{Y_{(I,J)}:(I,J)\in \mathbb{L}_{CG}\}$ with density $0<\rho(p)<1$ which are stochastically dominated from above by $\{X_{(I,J)}^{(n)}:(I,J)\in \mathbb{L}_{CG}\}$. 

Furthermore, $\rho(p)\to 1$ as $p\to 1$. 

\end{lem}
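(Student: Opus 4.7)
The plan is to adapt the classical Liggett-Schonmann-Stacey (LSS) domination theorem for $k$-dependent processes to our filtered setting, using the conditional-independence structure of the $X^{(n)}_{(I,J)}$'s that emerges level by level.

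The key structural observation is the following. Conditionally on $\mathcal{F}_{In}$, both the optimal path $\Gamma^{\text{opt}}_{(I,J)}$ and the measure $\nu^{(n)}_{(I,J)}$ are determined, and the remaining randomness in $\widetilde{W}^{(n)}_{(I,J),\pm}$ comes only from $\omega(l,y)$ with $In<l\leq (I+1)n$ and $y\in \tilde{B}_J^{(n)}$, because the trajectories appearing in $\Omega^{(n)}_{I+1}(\Gamma^{\text{opt}}_{(I,J),\pm})$ are confined to $\tilde{B}_J^{(n)}$ throughout that time slab. Since $\tilde{B}_J^{(n)}\cap \tilde{B}_{J'}^{(n)}=\emptyset$ whenever $|J-J'|\geq 2C_7+2$, the family $\{X^{(n)}_{(I,J)}\}_{J:(I,J)\in \mathbb{L}_{CG}}$ is $(2C_7+2)$-dependent conditionally on $\mathcal{F}_{In}$. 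Combined with Lemma \ref{lem:Berdens} (for $C_1$ chosen small enough that $1-\varepsilon\geq p$), each conditional marginal is at least $p$.

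With this in hand, I would first invoke the one-dimensional LSS domination theorem conditionally on $\mathcal{F}_{In}$: for $p$ exceeding a suitable threshold $p_0$, there exist, on an enlarged probability space, i.i.d.\ Bernoulli$(\rho(p))$ random variables $\{Y_{(I,J)}\}_{J}$ that are conditionally dominated from above by $\{X^{(n)}_{(I,J)}\}_{J}$, with $\rho(p)\to 1$ as $p\to 1$. I would then iterate over levels: at level $I+1$, the environment in the new slab $((I+1)n,(I+2)n]$ driving $\{X^{(n)}_{(I+1,J)}\}_J$ is independent of $\mathcal{F}_{(I+1)n}$, so the same conditional construction (performed with fresh independent randomization) extends the coupling and preserves the joint i.i.d.\ Bernoulli$(\rho(p))$ distribution of the $Y$'s constructed on previous levels. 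Taking the inductive limit yields the desired global coupling.

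The main obstacle will be the careful identification of the $\sigma$-fields: one must verify that, once $\mathcal{F}_{In}$ is revealed, the dependence of $\widetilde{W}^{(n)}_{(I,J),\pm}$ on $\omega$ is genuinely restricted to the slab $(In,(I+1)n]\times \tilde{B}_J^{(n)}$, and that the row-wise LSS couplings can be realized in a way measurable with respect to these local $\sigma$-fields so that their inductive assembly across levels produces a truly i.i.d.\ family. All other ingredients (the marginal bound, the determination of $\rho(p)$ and of the threshold $p_0$) are then inherited directly from the standard LSS theorem applied to a single row.
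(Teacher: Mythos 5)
Your proposal is correct, but it takes a genuinely different route from the paper's. The paper explicitly avoids invoking the full Liggett--Schonmann--Stacey (LSS) $k$-dependence domination theorem: the remark after Lemma \ref{lem:IJdom} points out that the field $\{X^{(n)}_{(I,J)}\}$ is not $k$-dependent in the two-sided sense that LSS Theorem 1.3 requires, because nothing is known about $Q(X^{(n)}_{(I,J)}=1\mid \sigma[X^{(n)}_{(I',J')}:I'>I])$. Instead the paper puts the lexicographic total order (first by $I$, then by $J$) on $\mathbb{L}_{CG}$, introduces an independent auxiliary i.i.d.\ Bernoulli$(r)$ field $\{Y_{(I,J)}\}$, sets $Z^{(n)}_{(I,J)}=X^{(n)}_{(I,J)}Y_{(I,J)}$, and proves a uniform conditional bound $\mathbb{P}(Z^{(n)}_{(I,J)}=1\mid Z^{(n)}_{(i,j)},\,(i,j)<(I,J))\geq\alpha r$ (Lemma \ref{lem:IJdom}, adapting LSS Lemma 1.2), after which Russo's lemma (here Lemma \ref{lem:stodomx}) yields the domination directly with $\rho=\alpha r\to1$. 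You, by contrast, apply the $k$-dependence theorem \emph{conditionally on} $\mathcal{F}_{In}$ within each row --- which is legitimate precisely because the conditional law of the $I$-th row depends only on the disjoint slabs $(In,(I+1)n]\times\tilde B^{(n)}_J$, giving the finite dependence range $2C_7+2$ --- and then stitch rows together via the independence of successive time slabs and fresh auxiliary randomization. In effect, you invoke as a black box the theorem whose proof the paper is unpacking, applied conditionally, at the cost of having to verify that the Strassen couplings realized at each level can be assembled into a globally i.i.d.\ family. The paper's route is more self-contained and avoids the level-by-level coupling bookkeeping; yours is conceptually cleaner about why the one-sided dependence structure suffices, and your dependence range $2C_7+2$ is the careful one (the paper's intermediate remark quotes ``$|J_1-J_2|\geq 4$,'' which does not match the stated disjointness $\tilde B^{(n)}_i\cap\tilde B^{(n)}_j=\emptyset$ only for $|i-j|\geq 2C_7+2$; either way, only the finiteness of the range matters, with the constants $\alpha,r$ adjusted accordingly).
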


\begin{proof}[Proof of Lemma \ref{lem:path}]
Lemma \ref{lem:Berdens} and Lemma \ref{lem:stodom} yield that $\{X_{(I,J}^{(n)}:(I,J)\in\mathbb{L}_{CG}\}$ dominates oriented site percolation  on $\mathbb{N}_0\times \mathbb{N}_0$ with density $0<\rho(p)<1$ from above.   

In particular, we can find by the standard contour argument that  the critical probability $\overrightarrow{p}$ of oriented site percolation is non-trivial. Thus, taking $C_1>0$ small  such that $Q\left(X_{(I,J)}^{(n)}=1\right)>p$ with $\rho(p)>\overrightarrow{p}$, we have that \begin{align*}
Q\left(\textrm{There exists an infinite nearest neighbor path $\Gamma$}\right)>0.
\end{align*}

\end{proof}

\subsection{Proof of Lemma \ref{lem:Berdens}}
When we  prove that \begin{align*}
\frac{Q\left(\left.\left(\widetilde{W}_{(I,J),*}^{(n)}\right)^2\right|\mathcal{F}_{In}\right)-Q\left(\left.\widetilde{W}_{(I,J),*}^{(n)}\right|\mathcal{F}_{In}\right)^2}{Q\left(\left.\widetilde{W}_{(I,J),*}^{(n)}\right|\mathcal{F}_{In}\right)^2}<\frac{\ve}{8}\quad \textrm{ for $*\in \{+,-\}$}			
\end{align*}
$Q$-a.s., we have from  Chevyshev's inequality that \begin{align*}
Q\left(\left.\left|\widetilde{W}_{(I,J),*}^{(n)}-Q\left[\left.\widetilde{W}_{(I,J),*}^{(n)}\right|\mathcal{F}_{In}\right]\right|\geq \frac{1}{2}Q\left[\left.\widetilde{W}_{(I,J),*}^{(n)}\right|\mathcal{F}_{In}\right]\right|\mathcal{F}_{In}\right)<\frac{\ve}{2}.
\end{align*}
In particular, we have that \begin{align*}
1-\frac{\ve}{2}< Q\left(\left.\widetilde{W}_{(I,J),*}^{(n)}\geq \frac{1}{2}Q\left[\left.\widetilde{W}_{(I,J),*}^{(n)}\right|\mathcal{F}_{In}\right]\right|\mathcal{F}_{In}\right)\leq Q\left(\left.\widetilde{W}_{(I,J),*}^{(n)}>c\right|\mathcal{F}_{In}\right).
\end{align*}
Combining Proposition \ref{prop:infprob}, it is enough to show that when we take $C_1>0$ small,  \begin{align*}
Q\left(\left.\left(\widetilde{W}_{(I,J),*}^{(n)}\right)^2\right|\mathcal{F}_{In}\right)-Q\left(\left.\widetilde{W}_{(I,J),*}^{(n)}\right|\mathcal{F}_{In}\right)^2
\end{align*}
could be small.

It is easy to see from the second moment argument in DPRE that \begin{align*}
&Q\left(\left.\left(\widetilde{W}_{(I,J),*}^{(n)}\right)^2\right|\mathcal{F}_{In}\right)-Q\left(\left.\widetilde{W}_{(I,J),*}^{(n)}\right|\mathcal{F}_{In}\right)^2\\
&=\sum_{x,x'\in B_{J}^{(n)}}\nu_{(I,J)}^{(n)}(x)\nu_{(I,J)}^{(n)}(x')P_{S,S'}^{x,x'}\left[\exp\left(\gamma (\beta)L_n(S,S') \right)-1:\widetilde{\Omega}_{J*}^{(n)}(S)\cap \widetilde{\Omega}_{J*}^{(n)}(S')\right]\\
&\leq \sum_{x,x'\in B_{J}^{(n)}}\nu_{(I,J)}^{(n)}(x)\nu_{(I,J)}^{(n)}(x')P_{S,S'}^{x,x'}\left[\exp\left(\gamma (\beta)L_n(S,S') \right)-1\right]
\end{align*}
where $\gamma(\beta)=\lambda(2\beta)-2\lambda(\beta)$, $P_{S,S'}^{x,x'}$ is the product of probability measures $P_S^x$ and $P_{S'}^{x'}$, and $L_{n}(S,S')=\dis \sum_{i=1}^n1\{S_i=S_i'\}$ is the collision local time of two simple random walks $S$ and $S'$ up to time $n$.

Since $\dis \exp\left(\gamma(\beta)L_N(S,S')\right)=\prod_{i=1}^n \left(e^{\gamma(\beta) 1\{S_i=S_i'\}}-1+1\right)=\prod_{i=1}^n\left(\left(e^{\gamma(\beta)}-1\right)1\{S_i=S_i'\}+1\right)$, we have 
\begin{align*}
&P_{S,S'}^{x,x'}\left[\exp\left(\gamma (\beta)L_n(S,S') \right)-1\right]\\
&=\sum_{k=1}^n \sum_{1\leq j_1<\cdots<j_k\leq n}P_{S,S'}^{x,x'}\left[\prod_{i=1}^k \left(e^{\gamma(\beta)}-1\right)1\{S_{j_i}=S_{j_i}'\}\right]\\
&=\sum_{k=1}^n(e^{\gamma(\beta)}-1)^k\sum_{1\leq j_1<\cdots<j_k\leq n}\sum_{x_i\in V} P_{S,S'}^{x,x'}\left(S_{j_1}=S'_{j_1}=x_1\right)\prod_{i=1}^{k-1}P_S^{x_i}(S_{j_{i+1}-j_i}=x_{i+1})^2\\
&\leq \sum_{k=1}^n (e^{\gamma(\beta)}-1)^k \left(Cn^{1-\frac{d_f}{d_w}}\right) ^k,
\end{align*}
where we have used the fact that \begin{align*}
\sum_{i=1}^n\sum_{y\in V}P_{S}^x(S_i=y)^2&\leq \sum_{i=1}^n \sup_{y\in V}P_S^x(S_i=y)\sum_{z\in V}P_S^x(S_i=z)
\stackrel{\eqref{UHK}}{\leq }\sum_{i=1}^n\frac{c_1}{i^{\frac{d_f}{d_w}}}
\leq Cn^{1-\frac{d_f}{d_w}}
\end{align*}
in the last line.
Since we have that \begin{align*}
\lim_{\beta\to 0}\frac{e^{\gamma(\beta)}-1}{\beta^2}=\gamma''(0),
\end{align*}
it follows that for small  $\beta>0$\begin{align}
Q\left(\left.\left(\widetilde{W}_{(I,J),*}^{(n)}\right)^2\right|\mathcal{F}_{In}\right)-Q\left(\left.\widetilde{W}_{(I,J),*}^{(n)}\right|\mathcal{F}_{In}\right)^2&\leq \sum_{k=1}^n \left(\left(2\gamma''(0)\beta^2\right)  \left(Cn^{1-\frac{d_f}{d_w}}\right) \right)^k\notag\\
&\leq \sum_{k=1}^n \left((2\gamma''(0)\beta^2) CC_1^\frac{d_w-d_f}{d_w}\beta^{-2}\right)^{k}\notag\\
&\leq \frac{2\gamma''(0) CC_1^\frac{d_w-d_f}{d_w}}{1-2\gamma''(0) CC_1^\frac{d_w-d_f}{d_w}},\label{eq:bddlower}
\end{align}
when $C_1>0$ is taken small enough  such that $2\gamma''(0) CC_1^\frac{d_w-d_f}{d_w}<1$. Furthermore, letting $C_1>0$ small, the RHS in \eqref{eq:bddlower} could be smaller than $\frac{\ve}{8}$ and hence Lemma \ref{lem:Berdens} follows.


\appendix
\section{Proof of Lemma \ref{lem:stodom}}
First, we will introduce some notations. Let $S$ be a countable set and $\Omega^{(S)}=\{0,1\}^S$ with product topology and corresponding Borel $\sigma$-algebra, $\mathcal{B}$. We define a partial order to $\Omega^{(S)}$ by saying that for $\omega_1,\omega_2\in \Omega^{(S)}$, $\omega_1\leq \omega_2$ if
 \begin{align*}
\omega_1(s)\leq \omega_2(s),\quad \text{for all }s\in S.
\end{align*}

We say  a measurable function $f:\Omega^{(S)}\to \mathbb{R}$ is increasing if $f(\omega_1)\leq f(\omega_2)$ for any $\omega_1,\omega_2\in \Omega^{(S)}$ with $\omega_1\leq \omega_2$. 

Given two probability measures $\mu,\nu$ on $(\Omega^{(S)},\mathcal{B})$, we say that $\mu$ stochastically dominates $\nu$ ($\mu\succcurlyeq \nu$) if for any continuous increasing function $f$, \begin{align*}
\int_{\Omega^{(S)}}f(\omega)\mu(d\omega)\geq \int_{\Omega^{(S)}} f(\omega)\nu(d\omega).
\end{align*} 

For $\rho\in [0,1]$, we denote by $\pi_\rho=\dis \prod_{s\in S}\mu^{(s)}_\rho$ a probability measure on $(\Omega^{(S)},\mathcal{B})$ with  marginal distribution $\pi_\rho(\{\omega:\omega(s)=1\})=\mu^{(s)}_\rho(\omega=1)=\rho$ for $s\in S$.

We shall come back to the proof of Lemma \ref{lem:stodom}.

The followings is the first key lemma.
\begin{lem}\label{lem:stodomx}{\cite[Lemma 1]{Rus}, \cite[Lemma 1.1]{LigSchSta}} Suppose that $\{X_s:s\in S\}$ is a family of $\{0,1\}$-valued random variables, indexed by $S$, with joint law $\mu$. Suppose $S$ can be totally ordered  in such a way that, given any finite subset of $S$, $s_1<s_2<\cdots<s_{j+1}$, and any choice  of $\ve_1,\cdots,\ve_j\in \{0,1\}$, then, whenever $\mathbb{P}(X_{s_1}=\ve_1,\cdots,X_{s_j}=\ve_j)>0$,
\begin{align*}
\mathbb{P}\left(\left.X_{s_{j+1}}=1\right|X_{s_1}=\ve_1,\cdots,X_{s_j}=\ve_j\right)\geq \rho.
\end{align*}
Then, $\mu\succcurlyeq \pi_\rho$.
\end{lem}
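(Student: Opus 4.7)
The plan is to build a monotone coupling: construct random variables $(X_s, Y_s)_{s \in S}$ on a common probability space such that $X = (X_s) \sim \mu$, $Y = (Y_s) \sim \pi_\rho$, and $Y_s \leq X_s$ for every $s \in S$ almost surely. Once such a coupling exists, the conclusion $\mu \succcurlyeq \pi_\rho$ follows immediately, since for any bounded continuous increasing $f$ we get
\[
\int f\, d\mu \;=\; \mathbb{E}[f(X)] \;\geq\; \mathbb{E}[f(Y)] \;=\; \int f\, d\pi_\rho.
\]

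To construct the coupling, use the given total order and the countability of $S$ to enumerate $S = \{s_1 < s_2 < \cdots\}$. On an auxiliary probability space take i.i.d.\ $\mathrm{Uniform}[0,1]$ random variables $(U_i)_{i \geq 1}$ and define $Y_{s_i} := \mathbf{1}\{U_i \leq \rho\}$, so that $(Y_s) \sim \pi_\rho$ by independence. Next build $(X_{s_i})$ inductively: having realised $\ve_1 := X_{s_1}, \ldots, \ve_{i-1} := X_{s_{i-1}}$, set
\[
p_i \;:=\; \mathbb{P}\bigl(X_{s_i}=1 \,\bigm|\, X_{s_1}=\ve_1,\ldots,X_{s_{i-1}}=\ve_{i-1}\bigr)
\]
on the event that the conditioning has positive $\mu$-probability (the only event that ever occurs a.s.), and extend $p_i := \rho$ on the remaining $\mu$-null configurations; then let $X_{s_i} := \mathbf{1}\{U_i \leq p_i\}$. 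Because $U_i$ is independent of the $\sigma$-algebra generated by $(U_1,\ldots,U_{i-1})$, hence of $(X_{s_1},\ldots,X_{s_{i-1}})$, a routine induction shows that for every $k$ the joint law of $(X_{s_1}, \ldots, X_{s_k})$ under the coupling coincides with the corresponding finite-dimensional marginal of $\mu$; Kolmogorov's extension theorem then identifies $X \sim \mu$ on $\{0,1\}^S$. The hypothesis of the lemma gives $p_i \geq \rho$, so $\{U_i \leq \rho\} \subseteq \{U_i \leq p_i\}$, which forces $Y_{s_i} \leq X_{s_i}$ pointwise in $i$.

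The construction is entirely standard, so there is really no substantive obstacle; the only points requiring care are bookkeeping ones: verifying that the inductive prescription reproduces the correct regular conditional law of $X_{s_i}$ given its predecessors, and handling the $\mu$-null configurations on which the conditional probability is strictly speaking undefined. The latter is dispatched by choosing any value in $[\rho,1]$ for $p_i$ on those configurations, which preserves both the coupling inequality $p_i \geq \rho$ and the $\mu$-law of $X$.
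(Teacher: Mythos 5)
The paper itself does not prove this lemma; it simply cites Russo and Liggett--Schonmann--Stacey for it, so there is no in-paper proof to compare against. Your monotone coupling via a single sequence of i.i.d.\ uniforms (using the same $U_i$ as threshold variable for both $Y_{s_i}$ and $X_{s_i}$) is the standard proof of this domination result, and the argument you give is correct: the inductive identification of the law of $(X_{s_1},\ldots,X_{s_k})$ with the $\mu$-marginal is sound because $U_i$ is independent of $\sigma(U_1,\ldots,U_{i-1})\supseteq\sigma(X_{s_1},\ldots,X_{s_{i-1}})$, and the hypothesis $p_i\geq\rho$ together with $Y_{s_i}=\mathbf{1}\{U_i\leq\rho\}$, $X_{s_i}=\mathbf{1}\{U_i\leq p_i\}$ gives the pointwise ordering; your handling of the $\mu$-null configurations is also the right bookkeeping.

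One small point worth flagging: your step ``enumerate $S=\{s_1<s_2<\cdots\}$'' implicitly assumes the given total order on $S$ has order type $\omega$, i.e.\ every element has only finitely many predecessors, so that the conditioning set at stage $i$ is a finite subset and the lemma's hypothesis applies. For the paper's $S=\mathbb{L}_{CG}$ with the lexicographic-style order this is the case, so your proof covers the application as used. For a general countable totally ordered $S$ not of type $\omega$, one would instead first establish domination of every finite marginal (running your coupling on each finite increasing chain) and then pass to the full product by noting that domination on $\{0,1\}^S$ is determined by increasing events depending on finitely many coordinates. Adding a sentence to that effect would make the argument match the full generality of the statement, but it is not needed for the way the lemma is invoked here.
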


We define a total order to $\mathbb{L}_{CG}$ by saying that \begin{align*}
(I_1,J_1)< (I_2,J_2)\quad \text{ if }\begin{cases}
I_1<I_2, \text{ or }\\
I_1=I_2 \text{ and }J_1<J_2.
\end{cases}
\end{align*}

We remark that $X_{(I,J_1)}^{(n)}$ and $X_{(I,J_2)}^{(n)}$ are  independent conditioned on $\mathcal{F}_{In}$ if $|J_1-J_2|\geq 4$. For each $(I,J)\in \mathbb{L}_{CG}$, we say that $(I',J')$ is adjacent to $(I,J)$ if $I'=I$ and $|J-J'|<4$.

The following lemma is a modification of \cite[Lemma 1.2]{LigSchSta}.
\begin{lem}\label{lem:IJdom}
We denote by $\mu^{(n)}$ the law of $\{X_{I,J}^{(n)}: (I,J)\in \mathbb{L}_{CG}\}$. Let $\ve>0$ small enough such that there exist $\alpha,r\in (0,1)$ with \begin{align*}
(1-\alpha)(1-r)^{5}\geq \ve\\
(1-\alpha)\alpha^{5}\geq \ve
\end{align*} 
Let $\{Y_{(I,J)}: (I,J)\in \mathbb{L}_{CG}\}$ be a family independent of $\{X_{I,J}^{(n)}: (I,J)\in \mathbb{L}_{CG}\}$ with joint law $\pi_r$, and let $Z_{(I,J)^{(n)}}=X_{(I,J)}^{(n)}Y_{(I,J)}$. Then, for each $(I,J)$, and for any choice of $\ve_{(i,j)}\in \{0,1\}$ with $(i,j)<(I,J)$, we have \begin{align*}
\mathbb{P}\left(Z_{(I,J)}^{(n)}=1\left|	Z_{(i,j)}^{(n)}=\ve_{(i,j)} \text{ for all }(i,j)<(I,J)	\right.\right)\geq \alpha r.
\end{align*}
\end{lem}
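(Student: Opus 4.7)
The argument is a conditional adaptation of the Liggett--Schonmann--Stacey stochastic-domination trick. The first reduction uses the independence of $Y_{(I,J)}$ from all past $Z$-variables to factor
\[
P\bigl(Z_{(I,J)} = 1 \,\bigm|\, Z_{(i,j)} = \varepsilon_{(i,j)} \text{ for all } (i,j) < (I,J)\bigr)
= r \cdot P\bigl(X_{(I,J)}^{(n)} = 1 \,\bigm|\, Z_{(i,j)} = \varepsilon_{(i,j)} \text{ for all } (i,j) < (I,J)\bigr),
\]
reducing the task to a lower bound of $\alpha$ for the $X$-probability.

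I would then condition further on $\mathcal{F}_{In}$ and simplify the past. For $i<I$, each $X_{(i,j)}^{(n)}$ is $\mathcal{F}_{In}$-measurable, so $Z_{(i,j)} = X_{(i,j)}^{(n)} Y_{(i,j)}$ is, given $\mathcal{F}_{In}$, a deterministic function of the independent Bernoulli $Y_{(i,j)}$; the $Z$-conditioning at time $i<I$ is therefore equivalent to a conditioning on certain $Y_{(i,j)}$'s, which is independent of $X_{(I,J)}^{(n)}$ given $\mathcal{F}_{In}$ and drops out. At time $I$, the conditional independence of $X_{(I,J_1)}^{(n)}$ and $X_{(I,J_2)}^{(n)}$ given $\mathcal{F}_{In}$ for $|J_1-J_2|\geq 4$ eliminates the contributions of distant same-time past sites, leaving only a bounded ``adjacent past'' set $\mathcal{N}^-$ whose cardinality is at most $5$.

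It then remains to prove the uniform bound
\[
P\bigl(X_{(I,J)}^{(n)} = 1 \,\bigm|\, \mathcal{F}_{In},\; Z_{(I,J')} = \varepsilon_{(I,J')} \text{ for all } (I,J') \in \mathcal{N}^-\bigr) \geq \alpha.
\]
I would do this by a case analysis on the $Y$-pattern at $\mathcal{N}^-$. Sites with $Y_{(I,J')} = 0$ yield $Z_{(I,J')} = 0$ and reveal nothing about $X_{(I,J')}^{(n)}$, whereas sites with $Y_{(I,J')} = 1$ pin $X_{(I,J')}^{(n)} = \varepsilon_{(I,J')}$. Writing $P(X_{(I,J)}^{(n)} = 0 \mid \cdots)$ as a ratio of weighted sums over the compatible $Y$-patterns and using $P(X_{(I,J)}^{(n)} = 0 \mid \mathcal{F}_{In}) \leq \varepsilon$ from Lemma~\ref{lem:Berdens}, the denominator admits a lower bound scaling like $(1-r)^{|\mathcal{N}^-|}$ in the regime of many $\varepsilon_{(I,J')} = 0$ (via the $Y = \mathbf{0}$ contribution) and like $\alpha^{|\mathcal{N}^-|}$-type factors in the regime of many $\varepsilon_{(I,J')} = 1$ (via the forced-$X$ contributions). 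The two hypotheses $(1-\alpha)(1-r)^5 \geq \varepsilon$ and $(1-\alpha)\alpha^5 \geq \varepsilon$ are calibrated precisely to handle these two extreme regimes; intermediate mixed configurations are controlled by convexity. Averaging back over $\mathcal{F}_{In}$ preserves the bound.

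\emph{Main obstacle.} The subtle case is the ``revealing'' one, in which $Y_{(I,J')} = 1$ and $\varepsilon_{(I,J')} = 0$ together force conditioning on the rare event $\{X_{(I,J')}^{(n)} = 0\}$ of probability $\leq \varepsilon$, which could in principle push the distribution of $X_{(I,J)}^{(n)}$ away from its typical near-$1$ value. The quantitative hypothesis $(1-\alpha)\alpha^5 \geq \varepsilon$ is the precise algebraic condition that makes this skew absorbable into the target bound $\alpha$; the first hypothesis plays the dual role in the opposite regime where the conditioning event is itself small because all $Y$'s must be $0$.
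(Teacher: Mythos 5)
The paper does not give a proof of this lemma at all---it merely cites \cite[Lemma~1.2]{LigSchSta} and says the argument is the same---so your reconstruction is the only proof under discussion, and it correctly captures both the LSS mechanism and the paper-specific twist: because the $X^{(n)}_{(I,J)}$ are not $k$-dependent in the usual LSS sense (as the paper's Remark notes), one first conditions on $\mathcal{F}_{In}$, at which point all $Z_{(i,j)}$ with $i<I$ become functions of the independent $Y$'s and drop out, all $Z_{(I,j)}$ with $j$ non-adjacent to $J$ drop out by the conditional independence statement, and only the adjacent same-slice past $\mathcal{N}^-$ survives. The factorisation of $r$, the bound $P(X^{(n)}_{(I,J)}=0\mid\mathcal{F}_{In})\le\varepsilon$ applied to the numerator, the two extreme regimes, the interpolation $\alpha^{|A|}(1-r)^{|B|}\ge\min(\alpha^{|A|+|B|},(1-r)^{|A|+|B|})$ for mixed $\varepsilon$-patterns, and the final averaging over $\mathcal{F}_{In}$ by the tower property are all sound.

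Two small inaccuracies are worth correcting, neither fatal. First, the size of the adjacent past set is not "at most $5$'': with the parity constraint defining $\mathbb{L}_{CG}$ and the threshold $|J_1-J_2|\ge 4$, the only candidate is $(I,J-2)$, so $|\mathcal{N}^-|\le 1$; the exponent $5$ in the hypotheses is simply a safe over-count, and your argument should state $|\mathcal{N}^-|\le 5$ as a conservative bound rather than as if $5$ were the actual degree. Second, your ``Main obstacle'' paragraph mis-attributes the roles of the two hypotheses and contradicts your own preceding analysis. The configuration $Y_{(I,J')}=1$, $\varepsilon_{(I,J')}=0$ (hence $X^{(n)}_{(I,J')}=0$) lies inside the $\varepsilon=0$ regime and needs no special treatment: the numerator $P(X^{(n)}_{(I,J)}=0,\,Z_{\mathcal{N}^-}=\varepsilon\mid\mathcal{F}_{In})\le P(X^{(n)}_{(I,J)}=0\mid\mathcal{F}_{In})\le\varepsilon$ holds regardless, and the denominator is bounded below by the $Y\equiv 0$ pattern alone, giving $(1-r)^{|B|}$; the condition $(1-\alpha)(1-r)^5\ge\varepsilon$ is what closes this case. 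The condition $(1-\alpha)\alpha^5\ge\varepsilon$ is instead calibrated for the $\varepsilon=1$ sites, where the denominator carries $P(X_A\equiv 1\mid\mathcal{F}_{In})\ge 1-|A|\varepsilon$ and one uses $1-\alpha^{|A|}=(1-\alpha)\sum_{i<|A|}\alpha^i\ge|A|(1-\alpha)\alpha^{|A|-1}\ge|A|\varepsilon$ to conclude $1-|A|\varepsilon\ge\alpha^{|A|}$. So the ``revealing'' sub-case is not where the $\alpha^5$ hypothesis earns its keep; your earlier paragraphs state the correct pairing and the ``Main obstacle'' gloss should be brought in line with them.
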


\begin{proof}We omit the proof since it is the same as the one in \cite{LigSchSta}.
\end{proof}
\begin{rem}
We don't know whether $\mathbb{Q}\left(X_{(I,J}^{(n)}=1| \sigma[X_{(I',J')}^{(n)}:I'>I]\right)>1-\ve$. So, we can't apply \cite[Theorem 1.3]{LigSchSta} directly to prove Proposition \ref{prop:infprob}. 
\end{rem}

\begin{proof}[Proof of Lemma \ref{lem:stodom}]
We denote by $\nu^{(n)}_{\alpha,r}$ the law of $\{Z_{(I,J)}^{(n)}: (I,J)\in \mathbb{L}_{CG}\}$. Then, we find by coupling of $X$ and $Z$ that $\mu^{(n)}\succcurlyeq \nu^{(n)}_{\alpha,r}$. Combining with Lemma \ref{lem:stodomx} and Lemma \ref{lem:IJdom} yields $\nu_{\alpha,r}^{(n)}\succcurlyeq \pi_{\alpha r}$. Letting $\ve\to 0$, we can take $\alpha r$ close to $1$ arbitrarily. 
\end{proof}



\vspace{2em}
{Naotaka Kajino, nkajino@math.kobe-u.ac.jp, Department of Mathematics, Graduate School of Science, Kobe University, Rokkodai-cho 1-1, Nada-ku, Kobe 657-8501, Japan}

{Kosei Konishi, k.kousei1206@icloud.com, Nippon Life Insurance Company, Imabashi 3-5-12, Chuo-ku, Osaka 541-8501, Japan}

{Makoto Nakashima, nakamako@math.nagoya-u.ac.jp, Graduate School of Mathematics, Nagoya University, Furocho, Chikusa-ku, Nagoya 464-8602,  Japan }

\end{document}